\newtheorem{theorem}{Theorem}[section]
\newtheorem{corollary}[theorem]{Corollary}
\newtheorem{lemma}[theorem]{Lemma}
\newtheorem{definition}[theorem]{Definition}
\newtheorem{proposition}[theorem]{Proposition}
\newtheorem*{remark}{Remark}
\newtheorem{question}{Question}[section]
\title{Intrinsic Smallness}
\author{Justin Miller\footnote{The author would like to thank his advisor, Dr. Peter Cholak, for the advice, discussion, and support that made this project possible.}\footnote{Partially supported by NSF-DMS-1854136}}
\date{}
\begin{document}

\maketitle

\begin{abstract}
    Recent work in computability theory has focused on various notions of asymptotic computability, which capture the idea of a set being ``almost computable.'' One potentially upsetting result is that all four notions of asymptotic computability admit ``almost computable'' sets in every Turing degree via coding tricks, contradicting the notion that ``almost computable'' sets should be computationally close to the computable sets. In response, Astor introduced the notion of intrinsic density: a set has defined intrinsic density if its image under any computable permutation has the same asymptotic density. Furthermore, introduced various notions of intrinsic computation in which the standard coding tricks cannot be used to embed intrinsically computable sets in every Turing degree. Our goal is to study the sets which are intrinsically small, i.e. those that have intrinsic density zero. We begin by studying which computable functions preserve intrinsic smallness. We also show that intrinsic smallness and hyperimmunity are computationally independent notions of smallness, i.e. any hyperimmune degree contains a Turing-equivalent hyperimmune set which is ``as large as possible'' and therefore not intrinsically small. Our discussion concludes by relativizing the notion of intrinsic smallness and discussing intrinsic computability as it relates to our study of intrinsic smallness.
\end{abstract}

\textbf{Keywords:} intrinsic computability, intrinsic density, asymptotic computation, hyperimmunity, weakly computably traceable

\section{Introduction}
A noteworthy phenomenon in the world of computing is that of problems which are generally ``easy'' to compute but have very difficult worst case instances. This gave rise to the notion of \textit{generic computability}, studied by Kapovich, Myasnikov, Schupp, and Shpilrain \cite{generic} in the context of computing the word problems of finitely generated groups. This notion asserts that a set is computable outside of a ``small'' error set where the algorithm does not answer. The notion of smallness here is that of having asymptotic density $0$:

\begin{definition}
    The partial density of $A\subseteq \omega$ at $n$ is 
    \[\rho_n(A)=\frac{|A\upharpoonright n|}{n}.\] 
    That is, it is the ratio of the number of things less than $n$ that are in $A$ to what could be in $A$. The upper (asymptotic) density of $A$ is 
    \[\overline{\rho}(A)=\limsup_{n\to\infty} \rho_n(A)\]
    and the lower (asymptotic) density of $A$ is 
    \[\underline{\rho}(A)=\liminf_{n\to\infty} \rho_n(A).\]
    If $\overline{\rho}(A)=\underline{\rho}(A)$, we call this limit the (asymptotic) density of $A$ and denote it by $\rho(A)$.
\end{definition}

Recall that $W_e$ is the domain of the $e$-th Turing machine $\varphi_e$.

\begin{definition}
    A set $A$ is generically computable if there is a partial computable function $\varphi_e$ such that $\underline{\rho}(W_e)=1$ and if $\varphi_e(n)\downarrow$, then $\varphi_e(n)=A(n)$. $\varphi_e$ is called a generic description of $A$.
\end{definition}

We think of generically computable sets as being computable ``almost everywhere,'' i.e. there is an algorithm that correctly answers questions on a set of density $1$, but does not answer on a small (density $0$) error set. Here the error set is the set of $n$ on which the description diverges. By changing the behavior of the generic description from diverging to something else, we obtain the other three notions of generic computability.

\begin{definition}
    A set $A$ is coarsely computable if there is a total computable function $\varphi_e$ such that $\underline{\rho}(\{n:\varphi_e(n)=A(n)\})=1$. $\varphi_e$ is called a coarse description of $A$.
\end{definition}

For coarse computability, the description is forced to answer every question, but is allowed to give the incorrect answer on the error set. That is, the error set is the set of numbers on which the description and the set disagree.

\begin{definition}
    A set $A$ is densely computable if there is a partial computable function $\varphi_e$ such that $\underline{\rho}(\{n:\varphi_e(n)\downarrow=A(n)\})=1$. $\varphi_e$ is called a dense description of $A$.
\end{definition}

For dense computability, the description can both answer questions incorrectly and not answer them on the error set. More specifically, the error set is both the places where the description diverges and the places where it disagrees with the set.

\begin{definition}
    A set $A$ is effectively densely computable if there is a total computable function $\varphi_e:\omega\to\{0,1,\square\}$ such that $\underline{\rho}(\varphi_e^{-1}(\{0,1\}))=1$ and $\varphi_e(n)\in\{0,1\}$ implies $\varphi_e(n)=A(n)$. ( $\square$ represents $\varphi_e(n)$ refusing to answer whether $n$ is in or out of the set.)
\end{definition}

Effective dense computability need not answer questions on the error set much like generic computability, but it must refuse to do so outright rather than running for infinite time. (That is, the error set, which is the inverse image of $\square$ under the description, must be computable.) Note that there are some immediate implications among these notions. Effective dense computability implies both coarse computability and generic computability, and both of these imply dense computability. For an overview of the history of these notions, refer to the first section of \cite{dense}.
\\
\\
One potentially unsettling feature of all four notions of asymptotic computability is that they depend heavily on the way in which information is coded. In fact, Jockusch and Schupp \cite{coarse} give a simple argument that can show every Turing degree contains a set which is effectively densely computable by ``hiding'' an entire set of any degree on a small computable set such as the factorial. (As the other three notions are implied by effective dense computability, the same is automatically true for every notion of asymptotic computability.) 

\begin{proposition}
Let $X\subseteq \omega$. Then there is $A\equiv_T X$ which is effectively densely computable.
\end{proposition}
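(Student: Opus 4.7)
The plan is to execute the coding trick explicitly. I would fix a computable, very sparse set $S\subseteq\omega$ of density $0$ whose $n$-th element $s_n$ is a computable function of $n$ and is easily recognizable; the factorials $S=\{n!:n\geq 1\}$ work, since $|S\upharpoonright n|\leq \log n$ and so $\rho_n(S)\to 0$. I would then define
\[A=\{s_n : n\in X\},\]
so that $A$ agrees with $\emptyset$ off of $S$ and on $S$ it encodes $X$ via the computable bijection $n\mapsto s_n$.

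Next I would verify the Turing equivalence. To compute $A$ from $X$: on input $m$, check computably whether $m\in S$; if not, output $0$; if yes, find the unique $n$ with $s_n=m$ and output $X(n)$. To compute $X$ from $A$: on input $n$, compute $s_n$ and output $A(s_n)$. Both reductions are computable, so $A\equiv_T X$.

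Finally I would exhibit an effective dense description. Define the total computable function
\[\varphi_e(m)=\begin{cases}\square & \text{if } m\in S,\\ 0 & \text{if } m\notin S.\end{cases}\]
Then $\varphi_e^{-1}(\{0,1\})=\omega\setminus S$, which has lower density $1$ because $\overline{\rho}(S)=0$, and whenever $\varphi_e(m)\in\{0,1\}$ we have $m\notin S$, so $A(m)=0=\varphi_e(m)$. Thus $\varphi_e$ is an effective dense description of $A$.

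There is no real obstacle here; the only thing worth being careful about is choosing $S$ both sparse enough to have density $0$ and decidable with a computable indexing $n\mapsto s_n$ so that the two reductions and the description all go through uniformly. The factorials satisfy all of this trivially, and the argument then reduces to three routine checks: density of $S$, bidirectional Turing reductions, and correctness of $\varphi_e$.
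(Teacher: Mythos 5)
Your proof is correct and is essentially identical to the paper's own argument: both encode $X$ on the factorials, establish the obvious bidirectional reductions, and use the description that outputs $\square$ on factorials and $0$ elsewhere. You simply spell out the routine verifications (density of $S$, the two reductions, correctness of $\varphi_e$) a bit more explicitly than the paper does.
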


\begin{proof}
    Given $X$, let $A=\{n!:n\in X\}$. Then $A$ is clearly Turing equivalent to $X$, and the function
    \[ f(n) = \begin{cases}
            \square & \mbox{if } n=k!\\
            0 &\mbox{otherwise }
        \end{cases}
    \]
    witnesses that $A$ is effectively densely computable.
\end{proof}

Therefore, these notions of being ``almost'' computable are heavily dependent upon how the set is coded: computably re-arranging the elements of a set can break the property of being ``almost computable.'' To combat this, Astor \cite{intrinsicdensity} introduced the notion of \textit{intrinsic density}, a strengthening of asymptotic density. Let $Perm$ be the index set of computable permutations of $\omega$.

\begin{definition}
The absolute upper density of $A\subseteq\omega$ is
\[\overline{P}(A)=\sup\{\overline{\rho}(\pi(A)):\pi\text{ a computable permutation}\}\]
and the absolute lower density of $A$ is
\[\underline{P}(A)=\inf\{\underline{\rho}(\pi(A)):\pi\text{ a computable permutation}\}.\]
If $\overline{P}(A)=\underline{P}(A)$, then we call this limit the intrinsic density of $A$ and denote it by $P(A)$.
\end{definition}

(In particular, if $A$ has intrinsic density $0$, then $\overline{\rho}(\pi(A))=0$ for every computable permutation. Furthermore, $\overline{P}(A)=0$ is enough to ensure $A$ has intrinsic density zero.) Of special interest is the property of having intrinsic density $0$, which has been studied extensively by Astor \cite{intrinsicdensity},\cite{intrinsicsmallness} in relation with other notions of smallness such as immunity. We will refer to sets that have intrinsic density $0$ as \textit{intrinsically small} to ease notation slightly. Technically finite sets meet this definition, but from here on we shall use the term to refer to infinite sets as those are the interesting ones.) We wish to study intrinsically small sets in order to use them as our error sets in an intrinsic version of asymptotic computability which we shall discuss in Section 5.
\\
\\
One easy observation about intrinsically small sets is that there are more computable functions $f$ such that $\overline{\rho}(f(A))=0$ for all intrinsically small sets $A$ than just the computable permutations. For example, if $\pi$ is a computable permutation, then $2\cdot \pi$ is not a computable permutation but the image of any intrinsically small set under it still has density $0$. The following definition captures the idea of classes of functions preserving smallness.

\begin{definition}
   For a class $\mathcal{F}$ of (partial) computable functions from $\omega$ to $\omega$, we say that $A\subset\omega$ is \textit{small for} $\mathcal{F}$ if $\overline{\rho}(f(A))=0$ for every $f\in\mathcal{F}$. 
\end{definition}

Notice that $A$ is intrinsically small if and only if it is small for computable permutations. In Section 2, we shall explore which classes of functions $\mathcal{F}$ have the property that every intrinsically small set is small for $\mathcal{F}$. This will give rise to a few questions, which we will study further in Section 3. In Section 4 we shall describe and explore the relativization of intrinsic smallness.

\section{Functions and Intrinsic Density}

We first note that not all intrinsically small sets are small for all computable functions, nor even all total computable functions. To do so, we use the following lemma:

\begin{lemma}
\label{jumpstrategy}
Let $X$ be a set of natural numbers. Suppose that $\{\mathcal{R}_e\}_{e\in\omega}$ is a collection of uniformly $X$-computable infinite sets. Then there is an intrinsically small set $A\leq\emptyset'\oplus X$ such that $A\cap \mathcal{R}_e\neq\emptyset$ for all $e$.
\end{lemma}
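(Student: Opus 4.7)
My plan is a finite extension construction producing $A=\{a_0<a_1<\cdots\}$, one element per stage. At stage $s$ I pick $a_s\in\mathcal{R}_s$ with $a_s>a_{s-1}$ such that, for every $e\le s$, at least one of the following holds:
\[
\pi_e(a_s)\uparrow,\quad \pi_e(a_s)\ge 2^s,\quad\text{or}\quad \exists\, b<a_s\text{ with }\pi_e(b)\downarrow = \pi_e(a_s).
\]
The first and third clauses are escape hatches that let the construction ignore badly behaved $\pi_e$; the middle clause is what matters. If $\pi_e$ is a total computable permutation, then $\pi_e$ is total and injective, both escape hatches fail, and the construction forces $\pi_e(a_s)\ge 2^s$ for every $s\ge e$.

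To carry this out from $\emptyset'\oplus X$, I first verify existence: for each $e\le s$ and each $v<2^s$, there is at most one violator of the disjunction with $\pi_e(a)=v$, namely the least $a$ with $\pi_e(a)\downarrow = v$ (any later preimage of $v$ is rescued by the third clause). Hence the disjunction excludes at most $(s+1)\cdot 2^s$ elements of $\omega$, so the infinite $X$-computable set $\mathcal{R}_s$ certainly contains a valid candidate. The disjunction is $\emptyset'$-decidable for fixed $a,e,s$ (its pieces are $\Sigma_1$ and $\Pi_1$), so enumerating $\mathcal{R}_s$ via $X$ and testing each candidate with $\emptyset'$ yields $a_s$ in finitely many oracle queries. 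Thus $A\le_T\emptyset'\oplus X$, and $a_e\in\mathcal{R}_e$ witnesses $A\cap\mathcal{R}_e\ne\emptyset$.

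Intrinsic smallness is the cleanest part. Fix a total computable permutation $\pi_e$. For every $s\ge e$ the construction forces $\pi_e(a_s)\ge 2^s$, so only $s\ge e$ with $2^s\le N$ can contribute to $\pi_e(A)\cap[0,N)$, giving $|\pi_e(A)\cap[0,N)|\le e+\log_2 N$. Hence $\rho_N(\pi_e(A))\to 0$ and $\overline{\rho}(\pi_e(A))=0$. The main obstacle I expect is crafting the stage-$s$ constraint exactly as above: it has to be simultaneously $\emptyset'$-decidable, satisfiable at every stage regardless of how ill-behaved the $\pi_e$ for $e\le s$ are, and tight enough at true permutations to force the $2^s$ growth. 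The three-clause disjunction together with the $(s+1)\cdot 2^s$ counting bound is precisely what reconciles these requirements; once it is in place, the density calculation and the oracle bookkeeping are routine.
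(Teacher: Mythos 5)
Your proof is correct, and the high-level strategy is the same as the paper's: build $A$ one element at a time from $\emptyset'\oplus X$, always taking the new element from the next $\mathcal{R}_s$, while using $\emptyset'$ queries about partial computable functions to force the image of $A$ under any computable permutation to grow fast enough for density zero. Where you diverge is in the stage machinery. The paper first uses $\emptyset'$ to produce an enumeration of exactly the injective partial computable functions, then at each stage checks which of these are ``suitable'' (have converged on enough of their domain and covered enough of their range), and sets a \emph{dynamic} threshold $r_{s+1}$ depending on the preimages computed so far. You instead index over \emph{all} partial computable $\pi_e$, impose a \emph{static} threshold $2^s$, and fold the pathological cases (partiality, non-injectivity) into explicit escape clauses in a disjunction; a pigeonhole count $(s+1)\cdot 2^s$ then replaces the paper's suitability bookkeeping to show a valid candidate always exists in $\mathcal{R}_s$. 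Both constructions deliver the same growth condition on $\pi(A)$ and the same oracle bound, so each is a sound alternative; your version avoids the preliminary $\emptyset'$-computable listing of injective functions and makes the density calculation more explicit ($|\pi_e(A)\cap[0,N)|\le e+\log_2 N$), at the cost of needing the counting argument to justify existence at each stage, which the paper sidesteps by letting its bound $r_{s+1}$ adapt to whatever has converged.
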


\begin{proof}
Note that the index set of injective partial computable functions is $\emptyset'$ computable, as the index set of noninjective partial computable functions is $\Sigma_1^0$. Therefore there is a $\emptyset'$-computable function $f$ such that $\varphi_{f(e)}$ is an enumeration of exactly the injective partial computable functions.
\\
\\
Let $A_0=\emptyset$ and $r_0=0$. Given $A_s$, $R_s$, define $A_{s+1},r_{s+1}$ as follows: Using $X$ as an oracle, find $k$ the least element of $R_s$ with $k>r_{s+1}$, which exists because $R_s$ is infinite. Let $A_{s+1}=A_s\cup\{k\}$. We say $e$ is suitable at stage $s$ if $[0,k]\subseteq dom(\varphi_{f(e)})$ and $[0,2\mathrm{max}(\varphi_{f(e)}(A_{s+1})]\subseteq \mathrm{range}(\varphi_{f(e)})$. Notice that $\emptyset'$ can compute whether or not $e$ is suitable at stage $s$ uniformly in $e$ and $s$ because it can ask finitely many questions about convergence. Now let 
\[r_{s+1}=\mathrm{max}\{\varphi_{f(e)}^{-1}(i):e<s\text{ suitable at stage }s,\ i\leq 2\mathrm{max}(\varphi_{f(e)}(A_{s+1})\}+1.\]
Let $A=\bigcup_{s\in\omega} A_s$. By construction, $A\cap R_s\neq\emptyset$ because an element of $R_s$ was added at stage $s+1$. Now let $\pi=\varphi_{f(e)}$ be a computable permutation. Then $\pi$ is suitable at every stage because its domain and range are $\omega$. Now let $k$ be the element added at stage $s+2$ for some $s>e$. Then for every $i\leq 2\mathrm{max}(\pi(A_{s+1}))$,
\[k>r_{s+1}>\pi^{-1}(i).\]
Therefore $\pi(k)>2\mathrm{max}(\pi(A_{s+1}))$. Thus after finitely many elements, each element of $\pi(A)$ is more than double the previous element. It follows immediately that $\overline{\rho}(\pi(A))=0$. As $\pi$ was an arbitrary computable permutation, $A$ is intrinsically small.
\end{proof}

We can now show that there is an intrinsically small set which is not small for total computable functions.

\begin{theorem}
    There is a set of intrinsic density $0$ which is not small for total computable functions. That is, there is an intrinsically small set $A$ and a total computable function $f$ such that $\overline{\rho}(f(A))>0$.
\end{theorem}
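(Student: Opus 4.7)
The plan is to apply Lemma \ref{jumpstrategy} directly. The key observation is that if we want $f(A)$ to have positive upper density, it is enough to force $A$ to meet many fibers of $f$; and if the fibers of $f$ form a uniformly computable family of infinite sets, then the lemma gives us such an $A$ that is simultaneously intrinsically small.

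Concretely, I would fix the total computable function $f\colon \omega \to \omega$ defined by $f(n) = \lfloor \log_2(n+1) \rfloor$, so that the fiber $f^{-1}(e)$ is the interval $[2^e - 1,\, 2^{e+1} - 1)$. Setting $\mathcal{R}_e = f^{-1}(e)$ gives a uniformly computable family of infinite sets (here I can take the oracle $X$ from the lemma to be $\emptyset$). By Lemma \ref{jumpstrategy}, there is an intrinsically small $A \leq_T \emptyset'$ with $A \cap \mathcal{R}_e \neq \emptyset$ for every $e \in \omega$.

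Then $e \in f(A)$ for every $e$, so $f(A) = \omega$ and in particular $\overline{\rho}(f(A)) = 1 > 0$, as required. I don't foresee any real obstacle: the entire construction is already done inside the lemma, and the only creative step is choosing $f$ to have a computable partition of $\omega$ into infinite fibers so that meeting every fiber is automatic once the lemma is invoked. Any total computable $f$ whose range is infinite and whose fibers are uniformly computable infinite sets would work equally well; $\lfloor \log_2(n+1)\rfloor$ is simply the cleanest choice.
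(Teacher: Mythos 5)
Your overall strategy is exactly right and is the same as the paper's: pick a total computable $f$ whose fibers $\{f^{-1}(e)\}_{e}$ form a uniformly computable family of \emph{infinite} sets, apply Lemma \ref{jumpstrategy} to get an intrinsically small $A$ meeting every fiber, and conclude $f(A)$ is cofinite. However, your concrete choice of $f$ does not satisfy the hypothesis you yourself identify as essential. With $f(n) = \lfloor \log_2(n+1)\rfloor$, the fiber $f^{-1}(e)$ is the interval $[2^e - 1,\, 2^{e+1}-1)$, which has exactly $2^e$ elements --- it is \emph{finite}. Lemma \ref{jumpstrategy} requires each $\mathcal{R}_e$ to be infinite (its construction repeatedly searches $R_s$ for an element exceeding an earlier bound $r_{s+1}$, and this search can fail on a finite set), so the lemma simply does not apply to this family.

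The fix is to replace the logarithm with something whose level sets are infinite. The paper uses the $2$-adic valuation: $f(n) = e$ where $2^e \mid n$ but $2^{e+1} \nmid n$ (and $f(0)=0$). Then $f^{-1}(e) = \{(2k+1)2^e : k \in \omega\}$ is infinite, computable, and the family is uniformly so; with this $f$ your argument goes through verbatim and matches the paper's proof. More generally, any computable partition of $\omega$ into uniformly computable \emph{infinite} cells works, but $\lfloor\log_2(n+1)\rfloor$ is not such a partition.
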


\begin{proof}
    As defined by Jockusch and Schupp \cite{coarse}, let $R_e=\{n:2^e|n\text{ but }2^{e+1}\not|n\}$. Define $f:\omega\to\omega$ via $f(0)=0$ and $f(n)=e$, where $n\in R_e$. (Note that this is well-defined, as the $R_e$'s form a partition of $\omega\setminus\{0\}$.) $f$ is a total computable function. 
    \\
    \\
    By Lemma \ref{jumpstrategy}, there is an intrinsically small set $A$ such that $R_e\cap A\not=\emptyset$ for all $e$. Then $f(A)$ is cofinite (in fact it is either $\omega$ or $\omega\setminus\{0\}$), and therefore of intrinsic density $1$. (So $A$ catastrophically fails to have density $0$ under $f$.)
\end{proof}
We see from this example that the failure of injectivity allowed us to cast a wide net in search of elements of $A$ and then group them together to create a set of large density. Below, we shall see that we cannot even limit this to finite inverse images and preserve the property of being intrinsically small. In fact, we cannot even limit this to finite inverse images with uniformly computable size.
\\
\\
We shall need the notion of a hyperimmune set to do this. Recall that a disjoint strong array is a collection $\{D_{f(n)}\}_{n\in\omega}$ of finite sets coded by a total computable function $f$ and the canonical indexing of finite sets, where the $D_{f(n)}$'s are pairwise disjoint. A set $X$ is \textit{hyperimmune} if for every disjoint strong array $f$, there exists some $n$ with $D_{f(n)}\cap X=\emptyset$.
\begin{theorem}
\label{infinite}
    There is an intrinsically small set which is not small for the collection of all total computable functions $f$ such that $f^{-1}(\{n\})$ is finite (and uniformly computable) for all $n$. That is, there is an intrinsically small set $A$ and a total computable function $f$ such that $\overline{\rho}(f(A))>0$ and a total computable function $g$ such that $g(n)=|f^{-1}(\{n\})|$ for all $n$.
\end{theorem}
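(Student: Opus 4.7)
The strategy is to construct $A$ together with a computable partition $\omega = \bigsqcup_{n}D_n$ into finite intervals $D_n=[c_n,c_{n+1})$, arranging that $A$ contains one element of every $D_n$. The block projection $f(k):=n$ for $k\in D_n$ is then a total computable function whose fibers $D_n$ are finite with sizes $c_{n+1}-c_n$ uniformly computable; since $f(A)=\omega$, we obtain $\overline{\rho}(f(A))=1>0$, with the witnessing $g(n)=c_{n+1}-c_n$.

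To place a suitable element of $A$ in each $D_n$, I would adapt the construction in Lemma~\ref{jumpstrategy} so that it operates inside the blocks. Use $\emptyset'$ to enumerate indices $f(e)$ of the injective partial computable functions, and at stage $n$ call $a\in D_n$ \emph{bad} for a currently suitable $e<n$ if $\varphi_{f(e)}(a)\leq 2\max\varphi_{f(e)}(A_n)$; by injectivity each such $e$ contributes at most $2\max\varphi_{f(e)}(A_n)+1$ bad elements. I then pick $c_{n+1}$ so that $|D_n|$ exceeds the total bad contribution, and take $a_n\in D_n$ to be the least element that is good for every currently suitable $e<n$. The verification of intrinsic smallness is the same as in Lemma~\ref{jumpstrategy}: for any computable permutation $\pi=\varphi_{f(e)}$, once $e$ becomes suitable we have $\pi(a_n)>2\max\pi(A_n)$ at every subsequent stage, so the values $\pi(a_n)$ at least double, $|\pi(A)\cap[0,N]|=O(\log N)$, and $\overline{\rho}(\pi(A))=0$.

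The main obstacle is reconciling the requirement that $\{c_n\}$ be computable (so $f$ is total computable with computable fiber sizes) with the fact that the bad-count bound is inherently $\emptyset'$-computable, since suitability depends on totality and injectivity of partial computable functions. The resolution is to replace the true bound by a computable overestimate: let
\[ c_{n+1}=c_n+B_n+1,\qquad B_n=\sum_{e\leq n}\bigl(2u_e^{[n]}(c_n)+1\bigr), \]
where $u_e^{[n]}(c_n)$ is a stage-$n$ computable approximation to $\max\varphi_e([0,c_n))$ (with the convention $\max\emptyset=0$). Summing over all $e\leq n$ rather than only over verified permutations just enlarges $D_n$ beyond what is strictly needed, so a good $a_n$ still exists and the recursion remains computable. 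The delicate point, and the step that demands the most care, is to arrange the stage-$n$ computational budget so that for every true computable permutation the approximations $u_e^{[n]}(c_n)$ eventually agree with the true $\max\varphi_e([0,c_n))$ at all sufficiently large $n$; this is what makes the doubling argument above apply uniformly to every computable permutation and deliver intrinsic smallness.
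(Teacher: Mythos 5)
Your approach is genuinely different from the paper's: the paper obtains $A$ non-constructively via the hyperimmune-free basis theorem (take a hyperimmune-free $\mathrm{DNC}_2$ degree, invoke Astor's characterization to find an intrinsically small $A$ in it, and then non-hyperimmunity of $A$ hands you a disjoint strong array hitting $A$, from which $f$ is read off), whereas you try to build $A$ simultaneously with a \emph{computable} interval partition $\{D_n\}$ by adapting the $\emptyset'$-construction of Lemma~\ref{jumpstrategy}. The obstacle you flag at the end is, I believe, a genuine and unfixable gap, not a delicate detail. In Lemma~\ref{jumpstrategy} the bound $r_{s+1}$ is $\emptyset'$-computable, which is exactly what lets it dominate $\varphi_{f(e)}^{-1}\bigl([0,2\max\varphi_{f(e)}(A_{s+1})]\bigr)$ for every $e$ that is \emph{suitable}. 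Here $c_{n+1}$, hence $|D_n|$, must be plain computable for $f$ and its fiber-size function $g$ to be computable, but there is no computable sequence $|D_n|$ that eventually exceeds $\sum_{e\leq n}(2\max\varphi_e(A_n)+1)$ for every total computable permutation $\varphi_e$. The quantity $u_e^{[n]}(c_n)$ you introduce is a stage-$n$ \emph{lower} bound on $\max\varphi_e([0,c_n))$, not an overestimate, and the requirement that some computable budget make these approximations eventually exact for \emph{every} total $\varphi_e$ fails for the same reason that no computable function dominates all computable functions (one diagonalizes against the fixed computable pair consisting of $\{c_n\}$ and the budget).

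Worse, the adversary can be made concrete without any appeal to slow convergence. Because $\{D_n\}$ is computable while $A$ is only $\emptyset'$-computable, a computable permutation $\pi$ can be built that, for a sparse infinite set of indices $n$, sends \emph{all} of $D_{n-1}$ to an interval $[M,M+|D_{n-1}|)$ with $M$ large and \emph{all} of $D_n$ into $[0,2M)$ (fitting the remaining blocks around these choices to keep $\pi$ a bijection). Then regardless of which $a_{n-1}\in D_{n-1}$ your $\emptyset'$-construction picks, $\max\pi(A_n)\geq\pi(a_{n-1})\geq M$ while $\pi(D_n)\subseteq[0,2M)$, so \emph{every} element of $D_n$ is bad for $\pi$: no admissible $a_n$ exists, the invariant $\pi(a_n)>2\max\pi(A_n)$ fails at infinitely many stages, and the doubling argument giving $\overline{\rho}(\pi(A))=0$ does not go through. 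The paper's proof sidesteps this entirely because it never fixes a computable block structure in advance; the array sizes come \emph{from} the already-given set $A$ (via its non-hyperimmunity), so there is no circular computable recursion that must guess how large the blocks need to be.
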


\begin{proof}
    Astor \cite{intrinsicsmallness} proved that the Turing degrees which contain an infinite intrinsically small set are those which are not weakly computably traceable. Kjos-Hanssen, Merkle, and Stephan \cite{highordnc} characterized these degrees as those which are High or DNC.
    \\
    \\
    It is well-known that there is a binary tree for which all paths are of PA degree. Recall that the PA degrees are exactly the $\text{DNC}_2$ degrees. Therefore, by the hyperimmune-free basis theorem, there is a $\text{DNC}_2$ degree that is hyperimmune-free. (For a review of this information, see Soare \cite{soare}.) This degree contains a set $A$ which is intrinsically small by the result of Astor. As $A$ is hyperimmune free, there exists a disjoint strong array $g$ such that $D_{g(n)}\cap A\neq\emptyset$ for all $n$. Without loss of generality, we can assume that $\mathrm{max}(D_{g(n)})<\mathrm{min}(D_{g(n+1)})$ for all $n$. (Given a disjoint strong array $g$, we can construct a new one $h$ as follows: $D_{h(0)}=D_{g(0)}$, and $D_{h(n+1)}$ is the first cell of the old array whose smallest element is larger than the largest element of $D_{h(n)}$.)
    \\
    \\
    Define $f:\omega\to\omega$ as follows: If $n\in D_{g(k)}$ for some $k$, let $f(n)=2k$. As $f$ is a disjoint strong array such that $\mathrm{max}(D_{g(n)})<\mathrm{min}(D_{g(n+1)})$, this is computable and well-defined. If $n\not\in\bigcup_{k\in\omega} D_{g(k)}$, then let $f(n)$ be the least odd number not realised as $f(m)$ for some $m<n$. Therefore $f$ is a total computable function with $|f^{-1}(\{n\})|$ finite and uniformally computable. (If $n=2k+1$ is odd, then the inverse image is a singleton. If $n=2k$ is even, then $f^{-1}(\{2k\})=D_{g(k)}$.) Furthermore, as $D_{g(n)}\cap A\neq\emptyset$ for all $n$, $f(A)$ contains all even numbers. Therefore $\overline{\rho}(f(A))\geq\frac{1}{2}$.
\end{proof}
We see that it is much more difficult for a set to be small for non-injective classes of functions. However, both examples relied heavily upon the fact that the functions were not injective. By switching our focus to (mostly) injective classes of functions, we can describe some classes of functions which any intrinsicall small set is small for. First, we provide an easy technical lemma.
\begin{lemma}
    Suppose $C$ is an infinite c.e. set. Then there exists an infinite, computable $H\subseteq C$ with $\rho(H)=0$.
\end{lemma}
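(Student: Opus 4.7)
The plan is to build $H$ as the range of a computable, strictly increasing function $a : \omega \to \omega$ taking values in $C$, with growth at least geometric, so that asymptotic density zero falls out automatically.

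First, fix a computable enumeration $c_0, c_1, c_2, \ldots$ of $C$ (not necessarily in increasing order). I would define $a_k$ recursively by stages. Let $a_0 = c_0$. Given $a_k$, define $a_{k+1}$ to be the first element $c_j$ appearing in the enumeration of $C$ with $c_j > 2 a_k$. Such an element must exist, because $C$ is infinite and therefore contains elements above any fixed bound. Since we are searching a computable enumeration of a c.e.\ set for an element with a decidable (bounded) property, the function $k \mapsto a_k$ is total computable.

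Let $H = \{a_k : k \in \omega\}$. Then $H \subseteq C$ by construction, and $H$ is computable: to decide whether $n \in H$, compute $a_0, a_1, \ldots$ until reaching some $a_k \geq n$, and check whether $a_k = n$. This terminates because the sequence $(a_k)$ is strictly increasing. For the density bound, since $a_{k+1} > 2 a_k$ and $a_0 \geq 0$, we get $a_k \geq 2^{k-1}$ for $k \geq 1$. Hence for any $n$, the number of elements of $H$ below $n$ is at most $\log_2 n + 2$, so
\[\rho_n(H) \leq \frac{\log_2 n + 2}{n} \xrightarrow[n\to\infty]{} 0,\]
yielding $\rho(H) = 0$.

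There is no serious obstacle here; the argument is essentially the standard fact that every infinite c.e.\ set contains an infinite computable subset, sharpened by requiring at-least-geometric growth in the selection step. The only thing to verify carefully is that the recursive definition of $a_{k+1}$ terminates, which is immediate from $C$ being infinite.
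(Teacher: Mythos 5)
Your proof is correct and follows essentially the same approach as the paper: both build $H$ by greedily selecting elements from a computable enumeration of $C$ subject to a growth condition forcing at-least-geometric spacing (you use $c_j > 2a_k$, the paper uses $c_j > h_n + 2^n$), and both obtain computability of $H$ from its increasing enumeration and density zero from the exponential growth.
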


\begin{proof}
    Let $\{c_i\}_{i\in\omega}$ be an enumeration of $C$. Then let $\{h_i\}_{i\in\omega}$ be such that $h_0=c_0$ and given $h_n$, $h_{n+1}=c_j$, where $j$ is the least index with $c_j>h_n+2^n$. Then $H$ is computable because it is a c.e. set with an increasing enumeration, and it clearly has density $0$.
\end{proof}

\begin{theorem}
\label{injective}
    Suppose that $A$ is an intrinsically small set. Then $A$ is small for the class of total computable injective functions with computable range.
\end{theorem}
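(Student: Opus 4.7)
The plan is to reduce the statement to the definition of intrinsic smallness by cooking up, from the given $f$, a computable permutation of $\omega$ whose image of $A$ dominates $f(A)$ in density. The key point is that a computable range provides exactly enough structure to ``repack'' $R := \mathrm{range}(f)$ back into $\omega$ in an order-preserving way.

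Since $f$ is total and injective, $R$ must be infinite, and by hypothesis it is also computable. Let $\sigma:\omega\to R$ be the order-preserving computable bijection enumerating $R$ in increasing order; then $\sigma^{-1}:R\to \omega$ is computable as well. Because $f$ is a computable bijection $\omega\to R$ and $\sigma^{-1}$ is a computable bijection $R\to\omega$, the composition $\pi:=\sigma^{-1}\circ f$ is a computable permutation of $\omega$, so intrinsic smallness of $A$ gives $\overline{\rho}(\pi(A))=0$.

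It remains to transfer this vanishing density back to $f(A)$. For each $n$ set $m_n:=|R\cap[0,n)|$. Because $\sigma$ is order-preserving it bijects $[0,m_n)$ onto $R\cap[0,n)$, and since $f(A)\subseteq R$ we obtain $|f(A)\cap[0,n)|=|\pi(A)\cap[0,m_n)|$. Dividing by $n$ and using $m_n\leq n$ gives $\rho_n(f(A))\leq \rho_{m_n}(\pi(A))$. Since $R$ is infinite, $m_n\to\infty$ as $n\to\infty$, and therefore $\overline{\rho}(f(A))\leq \overline{\rho}(\pi(A))=0$, as desired.

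I do not foresee a real obstacle: the argument becomes routine once the permutation $\pi=\sigma^{-1}\circ f$ is written down. The only minor subtlety is the density comparison at the end, where one must check that $m_n\to\infty$ so that the $\limsup$ along the subsequence $(m_n)$ does bound $\limsup_n \rho_n(f(A))$. The preceding lemma on density-zero computable subsets of infinite c.e. sets does not appear to be needed here, and is presumably reserved for a later result where the computable-range hypothesis is weakened.
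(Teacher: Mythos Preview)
Your proof is correct and is in fact the ``simpler'' argument the paper alludes to immediately after its own proof. The two approaches differ in how the permutation is built. You compose $f$ with the increasing enumeration $\sigma^{-1}$ of $R=\mathrm{range}(f)$, obtaining a permutation $\pi=\sigma^{-1}\circ f$ that need not agree with $f$ anywhere; the density comparison then rests on the order-preserving property of $\sigma$, which gives $|f(A)\upharpoonright n|=|\pi(A)\upharpoonright m_n|$ exactly. The paper instead fixes a computable density-zero ``error set'' $H\subseteq R$ (this is where the preceding lemma is invoked), lets $\pi$ agree with $f$ on $f^{-1}(R\setminus H)$, and dumps the remaining inputs onto $H\cup\overline{R}$; here the density comparison is the cruder bound $\rho_n(\pi(A))\geq\rho_n(f(A))-\rho_n(H)$.

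Your route is cleaner for this theorem. The paper's route is chosen deliberately: the error-set/perturbation idea---keep $f$ on a large set, reroute on a density-zero set---is exactly what is reused when the hypotheses are weakened to $*$-injective functions (Theorem~\ref{almostinjective}) and to ranges with defined density (Theorem~\ref{density}, Corollary~\ref{hasdensity}), settings where no order-preserving bijection to $\omega$ is available. Your closing guess about the lemma's purpose is therefore on the mark.
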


\begin{proof}
    We argue by contrapositive: Suppose $f$ is total computable injective function with computable range, and $A$ is a set with $\overline{\rho}(f(A))>0$. Then we construct a computable permutation $\pi$ such that $\overline{\rho}(\pi(A))>0$.
    \\
    \\
    Let $H\subseteq \mathrm{range}(f)$ be a computable set of density $0$. Now define $\pi:\omega\to\omega$ as follows: If $f(n)\not\in H$, $\pi(n)=f(n)$. If $f(n)\in H$, let $\pi(n)$ be the least element of $H\cup\overline{\mathrm{range}(f)}$ not realized in the range of $\pi$ by $m<n$. Then $\pi$ is a computable permutation, and
    \[\rho_n(\pi(A))=\frac{|\pi(A)\upharpoonright n|}{n}\geq \frac{|f(A)\upharpoonright n|-|H\upharpoonright n|}{n}=\rho_n(f(A))-\rho_n(H).\]
    (The inequality comes from the fact that $\pi$ and $f$ agree on $f^{-1}(\mathrm{range}(f)\setminus H)$.) Therefore, we obtain
    \[\overline{\rho}(\pi(A))\geq \overline{\rho}(f(A))-\overline{\rho}(H)=\overline{\rho}(f(a))>0.\]
    Therefore $\pi$ is a computable permutation for which $\overline{\rho}(\pi(A))>0$, so $A$ is not intrinsically small.
\end{proof}

Note that simpler proofs of Theorem 2.5 exist which do not require us to create an error set and construct a permutation, however this proof is illustrative of the techniques we shall use for more difficult proofs.

\begin{corollary}
\label{image}
    If $A$ is intrinsically small and $f$ is a total computable injective function with computable range, then $f(A)$ is intrinsically small.
\end{corollary}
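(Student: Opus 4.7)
The plan is to reduce the corollary directly to Theorem \ref{injective} by observing that the class of total computable injective functions with computable range is closed under precomposition with computable permutations. Specifically, to show $f(A)$ is intrinsically small, I need to show $\overline{\rho}(\pi(f(A))) = 0$ for every computable permutation $\pi$.

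First I would set $g = \pi \circ f$ and check that $g$ falls under the hypothesis of Theorem \ref{injective}. Totality is clear, and injectivity follows since both $\pi$ and $f$ are injective. For computability of the range, note that $\mathrm{range}(g) = \pi(\mathrm{range}(f))$, and since $\pi$ is a computable permutation, the image of a computable set under $\pi$ is computable (one decides membership of $n$ in $\pi(\mathrm{range}(f))$ by computing $\pi^{-1}(n)$ and asking whether it lies in the computable set $\mathrm{range}(f)$). So $g$ is a total computable injection with computable range.

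Then by Theorem \ref{injective}, since $A$ is intrinsically small, $A$ is small for $g$, meaning
\[\overline{\rho}(\pi(f(A))) = \overline{\rho}(g(A)) = 0.\]
As $\pi$ was an arbitrary computable permutation, $\overline{P}(f(A)) = 0$, so $f(A)$ has intrinsic density $0$.

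There is really no obstacle here; the only content is recognizing that the class of functions in the hypothesis of Theorem \ref{injective} is closed under composition with computable permutations on the left, which makes the corollary essentially immediate.
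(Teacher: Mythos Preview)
Your argument is correct and is exactly the paper's proof: the paper also observes that $\pi(f(A))=(\pi\circ f)(A)$ and that $\pi\circ f$ is a total computable injective function with computable range, then applies Theorem~\ref{injective}. You have simply supplied a bit more detail on why the range of $\pi\circ f$ is computable.
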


\begin{proof}
    This follows from Theorem \ref{injective} by the fact that $\pi(f(A))=\pi\circ f(A)$ and $\pi\circ f$ is a total computable injective function with computable range because $f$ is.
\end{proof}

\begin{corollary}
\label{join}
    If $A$ and $B$ are intrinsically small, then so is $A\oplus B$.
\end{corollary}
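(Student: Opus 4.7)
The plan is to reduce Corollary \ref{join} to Corollary \ref{image} together with the (straightforward) fact that the union of two intrinsically small sets is intrinsically small. Write $A \oplus B = E_A \cup O_B$, where $E_A = \{2n : n \in A\}$ and $O_B = \{2n+1 : n \in B\}$.

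First I would observe that the doubling map $f(n) = 2n$ is a total computable injection whose range is the computable set of even numbers, so by Corollary \ref{image} the image $f(A) = E_A$ is intrinsically small. By the same argument applied to $g(n) = 2n+1$ (total, computable, injective, with computable range equal to the odd numbers), $g(B) = O_B$ is intrinsically small.

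It remains to verify that the union of two intrinsically small sets is intrinsically small. Given any computable permutation $\pi$, we have $\pi(E_A \cup O_B) = \pi(E_A) \cup \pi(O_B)$, so
\[|\pi(A \oplus B) \upharpoonright n| \leq |\pi(E_A) \upharpoonright n| + |\pi(O_B) \upharpoonright n|.\]
Dividing by $n$ and taking $\limsup$ gives
\[\overline{\rho}(\pi(A \oplus B)) \leq \overline{\rho}(\pi(E_A)) + \overline{\rho}(\pi(O_B)) = 0 + 0 = 0,\]
where both terms vanish by the intrinsic smallness of $E_A$ and $O_B$. Since $\pi$ was arbitrary, $A \oplus B$ is intrinsically small.

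There is no real obstacle here: the only content is recognizing that Corollary \ref{image} applies to the two coding maps and that upper density is finitely subadditive. The rest is bookkeeping.
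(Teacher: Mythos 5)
Your proof is correct and follows exactly the paper's route: apply Corollary \ref{image} to $n \mapsto 2n$ and $n \mapsto 2n+1$, then observe that a union of two intrinsically small sets is intrinsically small. You simply spell out the subadditivity detail that the paper leaves as "easy to check."
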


\begin{proof}
    If $f$ is the function sending $n$ to $2n$, and $g$ is the function sending $n$ to $2n+1$, then by Corollary \ref{image} $f(A)$ and $g(B)$ are both intrinsically small. It is easy to check that the union of two intrinsically small sets is intrinsically small, as the permutation of the union is the union of the images under the permutation. Therefore, $A\oplus B=f(A)\cup g(B)$ is intrinsically small.
\end{proof}

We can improve this result. The use of $H$ in the proof allows us to notice that we can change a subset of density $0$ in the range and not suffer any consequences for preserving intrinsic smallness.

\begin{definition}
    A (partial) function $f:\omega\to\omega$ is *-injective, or almost injective, if $\rho(\{n:|f^{-1}(\{n\})|>1\})=0$. That is, a (partial) function is almost injective if the subset of the range where injectivity fails has density $0$.
\end{definition}

\begin{theorem}
\label{almostinjective}
    Suppose that $A$ is an intrinsically small set. Then $A$ is small for the class of total computable *-injective functions with computable range.
\end{theorem}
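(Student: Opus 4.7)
The plan is to adapt the contrapositive strategy of Theorem \ref{injective}. Assume $f$ is a total computable $*$-injective function with computable range $R$, and that $A$ satisfies $\overline{\rho}(f(A))>0$; I will construct a computable permutation $\pi$ with $\overline{\rho}(\pi(A))>0$, witnessing that $A$ is not intrinsically small. Let $B=\{n:|f^{-1}(\{n\})|>1\}$ be the set of range values where injectivity fails. Note that $B$ is c.e.\ (witnessed by finding two distinct preimages) and that $\rho(B)=0$ by the $*$-injectivity hypothesis.

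Using the preceding lemma, fix an infinite computable $H\subseteq R$ with $\rho(H)=0$, and define $\pi$ recursively as follows. At stage $n$, write $P_n=\{\pi(m):m<n\}$; if $f(n)\notin H\cup P_n$, set $\pi(n)=f(n)$, otherwise set $\pi(n)$ to be the least element of $H\cup\overline{R}$ not already in $P_n$. The new condition $f(n)\notin P_n$ (compared with Theorem \ref{injective}) reroutes the second, third, etc.\ preimages of each element of $B$, and it is a computable condition because $P_n$ is a finite, already-computed set even though $B$ itself is merely c.e.

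To verify that $\pi$ is a permutation, I would note that injectivity is built into the construction and argue surjectivity in cases: each $r\in R\setminus H\setminus B$ has a unique $f$-preimage $n_r$, and at stage $n_r$ no earlier $\pi$-value can equal $r$ (redirects land in $H\cup\overline{R}$, which avoids $r$), so $\pi(n_r)=r$; meanwhile infinitely many stages trigger a redirect (for instance, every preimage of each $h\in H$), so the ``least unused in $H\cup\overline{R}$'' rule eventually hits every element of $H\cup\overline{R}$. The same case analysis shows $f(A)\setminus(H\cup B)\subseteq\pi(A)$, so subadditivity of upper density gives $\overline{\rho}(\pi(A))\geq\overline{\rho}(f(A))>0$, as required.

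The main obstacle I anticipate is precisely that $B$ is c.e.\ but need not be computable, which rules out the naive approach of enlarging $H$ to swallow $B$ and invoking Theorem \ref{injective} as a black box. The workaround is the dynamic check ``$f(n)\in P_n$'', which computably singles out the redundant preimages of elements of $B$ on the fly, without ever having to decide membership in $B$.
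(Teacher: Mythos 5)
Your proof is correct and follows essentially the same approach as the paper's: both reroute the duplicate $f$-preimages (and elements landing in $H$) into a computable density-zero reservoir so that the resulting map agrees with $f$ outside a density-zero set, preserving positive upper density of the image of $A$. The only structural difference is that you construct the witnessing permutation $\pi$ directly, whereas the paper first builds a total computable injective function $g$ with computable range and then invokes Theorem~\ref{injective} as a black box; these amount to the same construction unrolled versus factored.
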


\begin{proof}
    We again argue by contrapositive: Suppose $f$ is total computable *-injective function with computable range, and $A$ is a set with $\overline{\rho}(f(A))>0$. Then we construct a total computable injective function $g$ such that $\overline{\rho}(g(A))>0$ and invoke Theorem \ref{injective}.
    \\
    \\
    Let $H\subseteq \mathrm{range}(f)$ be infinite, computable, and have density $0$. Then define $g(n)=f(n)$ if $f(n)$ has not been realized in $\mathrm{range}(g)$ by some $m<n$, and to be the least element of $H$ not realized in $\mathrm{range}(g)$ otherwise. Then $g$ is injective, as $g(n)$ cannot be in $\mathrm{range}(g\upharpoonright n)$ for any $n$ by construction. Furthermore,
    \[\rho_n(g(A))=\frac{|g(A)\upharpoonright n|}{n}\geq \frac{|f(A)\upharpoonright n|-|H\upharpoonright n|-|\{k:|f^{-1}(\{k\})|>1\}\upharpoonright n|}{n}=\]
    \[\rho_n(f(A))-\rho_n(H)-\rho_n(\{k:|f^{-1}(\{k\})|>1)).\]
    This gives
    \[\overline{\rho}(g(A))\geq \overline{\rho}(f(A))-\overline{\rho}(H)-\overline{\rho}(\{k:|f^{-1}(k)|>1\}=\overline{\rho}(f(A))>0.\qedhere\]
\end{proof}

\begin{remark}
While an intrinsically small set is small for the class of total computable *-injective functions with computable range, the image under such functions is not intrinsically small: Take the set $A$ and function $f$ from the proof of Theorem \ref{infinite} and let $g(n)=2^{f(n)}$. Then $g$ is *-injective because its entire image has density zero. However, there is a computable permutation $\pi$ that maps $image(g)$ to the non-factorials and the complement to the factorials. Then $\pi\circ g(A)$ is all but finitely many of the non-factorials and is therefore density one.
\end{remark}

To this point, we've seen that injectivity almost everywhere has been essential in allowing all intrinsically small sets to be small for our class of functions. However, up to this point we've also relied heavily on knowing that the range is computable: if the range is not computable, we may potentially fill in part of the range that $A$ would have been sent to later. In this case, we'd need to shift where the elements of $A$ are sent, potentially sending the density to $0$ in the process. As we'll see below, there are cases in which we can avoid this issue.

\begin{theorem}
\label{density}
    Suppose $A$ is a set and $f$ is a *-injective function with $\overline{\rho}(f(A))=q>0$ and $\overline{\rho}(\mathrm{range}(f))-\underline{\rho}(\mathrm{range}(f))<q$. Then there is a *-injective function $g$ with computable range such that $\overline{\rho}(g(A))>0$.
\end{theorem}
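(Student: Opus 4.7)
The plan is to build a $*$-injective $g$ with computable range by redirecting $f$ into a computable approximation $S$ of $T:=\mathrm{range}(f)$; the oscillation hypothesis provides exactly the slack needed for the redirections to cost less than $q$ in upper density.

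Fix $\delta>0$ with $(\overline\rho(T)-\underline\rho(T))+3\delta<q$, and write $r=\underline\rho(T)$, $R=\overline\rho(T)$. The central step is to extract, from the c.e.\ enumeration $T_s$ coming from $f$, a computable set $S\subseteq T$ with $\overline\rho(T\setminus S)\leq R-r+\delta$. Using the lower density, I would take a computable stage-deadline $h(n)$ (for instance, the least $s$ with $|T_s\cap[0,n+1)|\geq(r-\delta)(n+1)$) and set $S=\{n:n\in T_{h(n)}\}$. For large $n$, the bound $|T\cap[0,n+1)|\leq(R+\delta)(n+1)$ forces at most $(R-r+O(\delta))(n+1)$ elements of $T\cap[0,n+1)$ to still be absent from $T_{h(n)}$; taking $h$ monotone and being careful in the passage from $T_{h(n)}\cap[0,n+1)$ to the set $S$ itself then yields the density bound on $T\setminus S$.

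Given $S$, define $g(n)=f(n)$ whenever $f(n)\in S$ and $f(n)\notin g([0,n))$, and otherwise redirect $g(n)$ to the least unused element of $S$. Since every $s\in S\subseteq T$ is hit directly by some $f(n)$, the range of $g$ is exactly $S$, which is computable. The set of indices $n$ on which $g(n)\neq f(n)$ is contained in $f^{-1}(T\setminus S)$ together with the $f$-collision set; their densities are at most $R-r+O(\delta)$ and $0$ respectively, so $g$ is $*$-injective.

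Finally, $g(A)$ and $f(A)$ differ on a set of upper density at most $R-r+\delta$, so
\[
\overline\rho(g(A))\geq\overline\rho(f(A))-(R-r+\delta)=q-(R-r+\delta)>0,
\]
as required. The main obstacle is the first step: one must verify that the computable stage-deadline $h$ really yields an $S$ capturing all but $(R-r)$-density of $T$, and that this bound survives the passage from the $n$-dependent snapshot $T_{h(n)}\cap[0,n+1)$ to the single set $S$. It is here that the oscillation hypothesis is essential, since without the upper bound $R$ on $\overline\rho(T)$ there would be no control on the number of ``late'' elements of $T\cap[0,n)$ that the stage-deadline construction must absorb.
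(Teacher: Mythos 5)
Your overall plan matches the paper's: find a computable $H\subseteq\mathrm{range}(f)$ so that $\mathrm{range}(f)\setminus H$ has upper density less than $q$, then redirect $f$ to land in $H$. The paper simply invokes the theorem of Downey, Jockusch, and Schupp that a c.e.\ set of lower density $r$ has a computable subset of lower density $r$ (and then defines $g(n)=f(n)$ if $f(n)\in H$, else $g(n)=0$); your $g$ instead does a greedy injectivization into $S$, which is also fine, provided $S$ is infinite. The real divergence is that you attempt to prove the DJS extraction from scratch with a stage-deadline, and this is where there is a genuine gap rather than a detail to ``be careful'' about.

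Concretely, your construction of $S$ fails already for $f=\mathrm{id}$, where $T=\omega$, $r=R=1$, and $T_s=f([0,s))=[0,s)$. Your deadline is $h(n)=$ least $s$ with $|T_s\cap[0,n+1)|\geq(1-\delta)(n+1)$, which evaluates to $h(n)=\lceil(1-\delta)(n+1)\rceil$; this is already monotone. Then $n\in S$ iff $n\in T_{h(n)}=[0,h(n))$ iff $n<\lceil(1-\delta)(n+1)\rceil$, which fails for all $n\gtrsim 2/\delta$. So $S$ is finite and $\overline\rho(T\setminus S)=1$, the opposite of what you need. The structural reason is that the bound ``at most $(R-r+O(\delta))(n+1)$ elements of $T\cap[0,n+1)$ are absent from $T_{h(n)}$'' controls a single snapshot $T_{h(n)}$, whereas membership of $m$ in $S$ is judged against the \emph{earlier} snapshot $T_{h(m)}\subseteq T_{h(n)}$; the elements that slip in strictly between $T_{h(m)}$ and $T_{h(n)}$ are invisible to the snapshot bound, and in the identity example \emph{every} element slips in this way. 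A keyed-to-$n$ deadline therefore cannot work; one needs the blockwise construction in DJS, where $S$ is defined on a sparse sequence of intervals $[n_{k-1},n_k)$ using a single stage $s_k$ per block (chosen so that $|T_{s_k}\cap[0,n_k)|\geq(r-\delta)n_k$) rather than a per-element deadline. With that replaced, the rest of your argument (the redirection and the final density estimate) goes through, though you should also note that $S$ is infinite (which the DJS subset is, since it has positive lower density) so that the greedy injectivization is well-defined and has range exactly $S$.
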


\begin{proof}
    As $\mathrm{range}(f)$ is c.e., there is a computable subset $H$ of $\mathrm{range}(f)$ with $\underline{\rho}(H)>\overline{\rho}(\mathrm{range}(f))-q$ by Downey, Jockusch, and Schupp \cite{DJS}. In particular,
    \[\overline{\rho}(\mathrm{range}(f)\setminus H)\leq \overline{\rho}(\mathrm{range}(f))-\underline{\rho}(H)<q.\]
    Define $g:\omega\to\omega$ via $g(n)=f(n)$ if $f(n)\in H$, and $g(n)=0$ otherwise. Notice that $g$ is *-injective, as 
    \[\{n:|g^{-1}(\{n\})|>1\}\subseteq\{n:|f^{-1}(\{n\}|>1\}\cup\{0\}.\]
    Furthermore, $\mathrm{range}(g)=H\cup \{0\}$ is computable. Lastly, notice that
    \[\rho_n(g(A))=\frac{|g(A)\upharpoonright n|}{n}\geq\frac{|f(A)\upharpoonright n|-|\{k<n:k\not\in H\text{ and }k\in f(A)|}{n}\geq\]
    \[\frac{|f(A)\upharpoonright n|-|(\mathrm{range}(f)\setminus H)\upharpoonright n|}{n}=\rho_n(f(A))-\rho_n(\mathrm{range}(f)\setminus H).\]
    By the above fact that $\overline{\rho}(\mathrm{range}(f)\setminus H)\leq\overline{\rho}(\mathrm{range}(f))-\underline{\rho}(H)<q$, 
    \[\overline{\rho}(g(A))> \overline{\rho}(f(A))-q=q-q=0\]
    That is, $\overline{\rho}(g(A))>0$.
\end{proof}

\begin{corollary}
\label{hasdensity}
    Suppose that $A$ is an intrinsically small set. Then $A$ is small for the class of total computable *-injective functions whose range has defined density.
\end{corollary}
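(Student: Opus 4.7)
The plan is to argue by contrapositive and chain Theorem \ref{density} with Theorem \ref{almostinjective}. Assume $A$ is not small for the class in question: there is a total computable *-injective $f$ whose range has defined density $d = \rho(\mathrm{range}(f))$ and satisfies $\overline{\rho}(f(A)) = q > 0$.

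Since $\mathrm{range}(f)$ has defined density, we have $\overline{\rho}(\mathrm{range}(f)) = \underline{\rho}(\mathrm{range}(f)) = d$, so
\[ \overline{\rho}(\mathrm{range}(f)) - \underline{\rho}(\mathrm{range}(f)) = 0 < q. \]
This is precisely the hypothesis of Theorem \ref{density}, which then supplies a *-injective function $g$ with computable range such that $\overline{\rho}(g(A)) > 0$. Inspecting the construction in the proof of Theorem \ref{density}, $g$ is defined piecewise from $f$ and a computable subset $H$ of $\mathrm{range}(f)$ by a computable rule, so $g$ is total computable whenever $f$ is. Hence Theorem \ref{almostinjective} applies to $g$, forcing $A$ not to be intrinsically small, which contradicts the hypothesis.

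The proof is essentially bookkeeping: the only thing to verify beyond invoking the two previous results is that Theorem \ref{density}'s output $g$ retains total computability, which is immediate from its explicit definition. No genuine obstacle arises; the interest of the corollary is that having defined density on the range is already enough to upgrade the weaker conclusion of Theorem \ref{density} (existence of some *-injective function with computable range producing positive density on $A$) into smallness for all such $f$ via Theorem \ref{almostinjective}.
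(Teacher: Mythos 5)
Your proof is correct and follows essentially the same route as the paper: argue by contrapositive, observe that defined density of the range makes the hypothesis of Theorem \ref{density} trivially satisfied, obtain $g$ with computable range, and finish with Theorem \ref{almostinjective}. The one extra detail you supply — checking that the $g$ produced in Theorem \ref{density} inherits total computability from $f$ — is a worthwhile observation the paper leaves implicit.
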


\begin{proof}
    We again argue by contrapositive: Suppose $f$ is total computable *-injective function whose range has defined density, and $A$ is a set with $\overline{\rho}(f(A))>0$. Then by Theorem \ref{density}, as $\overline{\rho}(\mathrm{range}(f))-\underline{\rho}(\mathrm{range}(f))=0$, there is a *-injective function $g$ with computable range such that $\overline{\rho}(g(A))>0$. The result follows by Theorem \ref{almostinjective}.
\end{proof}

By the remark following the proof of Theorem \ref{almostinjective}, we see that the image of an intrinsically small set under a total computable *-injective function whose range has defined density need not be intrinsically small. However if we restrict ourselves to injective functions, can we recover the analogue of Corollary \ref{image}? The same argument does not work, as the image of a c.e. set with defined density under a computable permutation need not have defined density.

\begin{question}
\label{preserve}
    If $A$ is intrinsically small and $f$ is a total computable injective function whose range has defined density, then is $f(A)$ intrinsically small?
\end{question}

Additionally, the natural follow-up question to Corollary \ref{hasdensity} remains open. This question is closely related to Question \ref{preserve}.

\begin{question}
\label{question-injective}
        Suppose that $A$ is an intrinsically small set. Is $A$ small for the class of total computable *-injective functions? Total computable injective functions?
\end{question}
Notice that if the answer here is yes, then the analogue of Corollary {\ref{image}} for computable injective functions follows immediately from the same argument. Therefore a positive answer yields a positive answer to Question {\ref{preserve}}, and a negative answer to Question {\ref{preserve}} yields a negative answer to Question {\ref{question-injective}}. The opposite direction also seems closely related, but any implications are not immediately obvious. 
\\
\\
Theorems \ref{almostinjective} and \ref{density} help to characterize what must happen in the scenario where the answer to Question \ref{question-injective} is no: The upper and lower density of the range are relatively far apart, allowing small elements of $f(A)$ to show up at late stages after any computable process ``thinks'' $\mathrm{range}(f)$ is done enumerating small elements.
\\
\\
Corollary {\ref{hasdensity}} can already be used in conjunction with known results. For example, Jockusch (correspondence with Astor) showed that r-maximal sets have intrinsic density (and therefore density) $1$, so the image of any intrinsically small set under a computable injective function whose range is maximal is small.

\section{Hyperimmunity and Intrinsic Smallness}

It is important to note that when studying whether or not certain properties relate to intrinsic smallness, we shall study the sets themselves rather than their degrees: coding tricks can show that every Turing degree contains a set with undefined density. In the c.e. degrees, this set can be taken to be c.e.

\begin{lemma}
\label{degree}
    Every Turing degree contains a set $W$ with $\underline{\rho}(W)=0$ and $\overline{\rho}(W)=1$. 
\end{lemma}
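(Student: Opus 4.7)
The plan is to construct $W$ by starting with a computable set $C$ whose density already oscillates between $0$ and $1$, and then modifying $C$ on a computable set of density $0$ in order to encode a representative of the given Turing degree without disturbing the density behavior.

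First I would build the oscillating computable set. Take a fast-growing computable sequence $a_0 < a_1 < a_2 < \cdots$ with $a_{k+1} \geq 2^{a_k}$, so that in particular $a_k / a_{k+1} \to 0$, and define
\[ C = \bigcup_{k \in \omega} [a_{2k}, a_{2k+1}). \]
Then $C$ is computable, and at the sampling points $n = a_{2k+1}$ we have $\rho_n(C) \geq (a_{2k+1} - a_{2k})/a_{2k+1} \to 1$, while at $n = a_{2k+2}$ we have $\rho_n(C) \leq a_{2k+1}/a_{2k+2} \to 0$. Hence $\overline{\rho}(C) = 1$ and $\underline{\rho}(C) = 0$.

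Next, given a representative $X$ of the prescribed Turing degree, I would encode $X$ on a sparse computable set. Let $E = \{2^n : n \in \omega\}$, which is computable and has density $0$, and define
\[ W = (C \setminus E) \cup \{2^n : n \in X\}. \]
Since $C$ and $E$ are computable, $W$ is computable from $X$; conversely, for every $n$ we have $W(2^n) = X(n)$ (the term $C \setminus E$ contributes nothing at points of $E$), so $X$ is computable from $W$. Hence $W \equiv_T X$.

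Finally, $W$ and $C$ differ only on a subset of $E$, so $|W \upharpoonright n| - |C \upharpoonright n|$ is bounded in absolute value by $|E \upharpoonright n|$, and $\rho_n(E) \to 0$. Therefore $\underline{\rho}(W) = \underline{\rho}(C) = 0$ and $\overline{\rho}(W) = \overline{\rho}(C) = 1$, as required. There is no real obstacle to this argument; the only thing to be careful about is ensuring that the growth rate of $(a_k)$ is fast enough to swamp the cumulative contribution of earlier blocks when computing $\underline{\rho}(C)$, which is handled automatically by a doubly exponential choice.
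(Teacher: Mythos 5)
Your proposal is correct and takes essentially the same approach as the paper: code $X$ on a sparse computable set of density $0$ (you use $\{2^n\}$, the paper uses $\{n!\}$) and union it with a computable sequence of rapidly widening intervals to force the upper density to $1$ while the lower density stays $0$. The only cosmetic difference is that the paper's coding set $\{n!\}$ is disjoint from its padding intervals $((2n)!,(2n+1)!)$ by construction, so it never needs the "remove $E$ then re-add" step you use to handle possible overlap.
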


\begin{proof}
    Given $C$, let $D=\{n!:n\in C\}$ and $W=D\cup\bigcup_{n\in\omega}((2n)!,(2n+1)!)$. Then $W\equiv_T D\equiv_T C$, and $\underline{\rho}(W)=0$ because \[\rho_{(2n+2)!}(W)=\frac{|W\upharpoonright (2n+2)!|}{(2n+2)!}\leq \frac{(2n+1)!}{(2n+2)!}=\frac{1}{2n+2}.\]
    Conversely, $\overline{\rho}(W)=1$ as
    \[\rho_{(2n+1)!}(W)=\frac{|W\upharpoonright (2n+1)!|}{(2n+1)!}\geq \frac{(2n+1)!-(2n)!}{(2n+1)!}=1-\frac{1}{2n+1}.\]
    Clearly if $C$ is c.e., then so is $W$.
\end{proof}
We shall see below that additional properties on the starting set $C$ can be recovered in $W$ by modifying the construction.
\\
\\
We now turn our attention to hyperimmune sets, a competing notion of smallness. Astor \cite{intrinsicdensity} studied the connection between varying notions of immunity and intrinsic density thoroughly. In particular, it is known that hyperimmune sets have intrinsic lower density $0$, and therefore that hypersimple sets have intrinsic upper density $1$. (Hypersimple sets are c.e. sets whose complement is hyperimmune. Recall that hyperimmune sets are infinite by definition, so hypersimple sets are co-infinite.) One question left open in \cite{intrinsicdensity} (later answered by Astor in \cite{intrinsicsmallness} using a degree argument) was whether or not a hypersimple set could have lower density $0$, or at least non-$1$ lower density. The answer is yes, showing that hypersimple sets need not have defined density. We give a constructive proof, showing that every hypersimple set yields a Turing equivalent hypersimple set which has lower density $0$. (That is, every hypersimple set has an equivalent hypersimple set which is ``as small as possible.'')

\begin{theorem}
\label{hypersimple}
    Let $C$ be a hypersimple set. Then there is a hypersimple set $W\equiv_T C$ with $\underline{\rho}(W)=0$.
\end{theorem}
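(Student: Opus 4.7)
My plan is to build $W$ by coarsening $C$ to the level of very fast-growing blocks, transferring the hyperimmunity of $\overline{C}$ via a block-level pullback. I would let $\phi(n) = 2^{2^n}$ and $B_n = [\phi(n), \phi(n+1))$, then define $W = \bigcup_{n \in C} B_n$. Equivalently, $x \in W$ iff $n \in C$ for the unique $n$ with $x \in B_n$, so $W$ is the preimage of $C$ under a total computable function and hence c.e.; the set is co-infinite because $\overline{W} \supseteq B_n$ for every $n \in \overline{C}$; and $W \equiv_T C$ via the $m$-reduction $n \mapsto \phi(n)$ in one direction and the ``block" function $x \mapsto n$ in the other.

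For $\underline{\rho}(W)=0$: whenever $m \in \overline{C}$, the block $B_m$ lies entirely in $\overline{W}$, so $W \cap [0, \phi(m+1)) \subseteq [0, \phi(m))$ and thus $\rho_{\phi(m+1)}(W) \leq \phi(m)/\phi(m+1) = 2^{-2^m}$. Since $\overline{C}$ is infinite and $2^{-2^m} \to 0$, this gives a subsequence along which $\rho_n(W) \to 0$, so $\underline{\rho}(W) = 0$.

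The hyperimmunity of $\overline{W}$ is the crux. Suppose for contradiction that $\{D_{h(n)}\}$ is a disjoint strong array with $D_{h(n)} \cap \overline{W} \neq \emptyset$ for all $n$. I would first reindex computably so that $\max D_{h(n)} < \min D_{h(n+1)}$ via the standard subarray construction, picking the next cell whose minimum exceeds the previous maximum (possible because only finitely many disjoint cells can lie below any fixed bound). Then I would extract a further computable subsequence $\{D_{h(n_i)}\}$ in which the block containing $\min D_{h(n_{i+1})}$ strictly exceeds the block containing $\max D_{h(n_i)}$ --- possible since the cell minima now tend to infinity. Setting $E_i = \{k : B_k \cap D_{h(n_i)} \neq \emptyset\}$, the block separation forces $\{E_i\}$ to be a disjoint strong array; and since $\overline{W} = \bigcup_{k \in \overline{C}} B_k$, the hypothesis $D_{h(n_i)} \cap \overline{W} \neq \emptyset$ translates to $E_i \cap \overline{C} \neq \emptyset$ for each $i$, contradicting the hyperimmunity of $\overline{C}$. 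The main obstacle is precisely this two-step reindexing: without it, the naive pullback $\{E_i\}$ may fail to be pairwise disjoint, so hyperimmunity of $\overline{C}$ would not immediately yield the contradiction.
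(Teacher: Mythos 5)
Your proof is correct but takes a genuinely different route from the paper's. The paper's argument leans on the fact that hypersimple sets have upper density~$1$: it enumerates $C$, repeatedly identifies dense initial segments, shifts the tail of $C$ right to insert computable gaps $[u_e,u_e+m_e]$ (driving the lower density to~$0$), and then restores hyperimmunity by enumerating the $n$-th gap back into $W$ exactly when $n$ enters $C$; hyperimmunity is then verified by bounding the principal function of $\overline{C}$ in terms of that of $\overline{W}$. Your construction instead inflates each $n$ to a doubly-exponential block $B_n=[\phi(n),\phi(n+1))$ and sets $W=\bigcup_{n\in C}B_n$. This is shorter and more uniform: the density control and the equivalence $W\equiv_T C$ (in fact a many-one equivalence) come immediately from the block structure, there is no appeal to $\overline{\rho}(C)=1$, and the hyperimmunity of $\overline{W}$ is verified by a direct block-level pullback of a disjoint strong array rather than by comparing principal functions. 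Your two-stage reindexing (first $\max D_{h(n)}<\min D_{h(n+1)}$, then block-separating the cells) is exactly the right care to make the pulled-back array $\{E_i\}$ pairwise disjoint, and the translation $D_{h(n_i)}\cap\overline{W}\neq\emptyset\Rightarrow E_i\cap\overline{C}\neq\emptyset$ is immediate from $\overline{W}=\bigcup_{k\in\overline{C}}B_k$ (modulo the finitely many integers below $\phi(0)$). One cosmetic point worth a sentence: with $\phi(n)=2^{2^n}$ the blocks only cover $[\,\phi(0),\infty)$, so $\{0,1\}\subseteq\overline{W}$ sits outside any block; this is harmless because after the first reindexing all but finitely many cells lie above $\phi(0)$, or you can simply start $\phi$ at $0$. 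Your construction also has the pleasant side effect that $W$ is a many-one preimage of $C$, so it handles the paper's follow-up remark about co-hyperimmune (not necessarily c.e.) $C$ without any change of argument.
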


\begin{proof}
    As $C$ is hypersimple, it has intrinsic upper density (and therefore upper density) $1$. We cannot use the strategy from Lemma \ref{degree} directly, as the resulting set will not even be immune, let alone hyperimmune. To avoid this problem, we shall leave intervals of $C$ intact and introduce gaps between the intervals in noncomputable fashion. Informally, we first wish to shift portions of $C$ over to make large gaps, ensuring that the resulting set has lower density $0$. We then leave an even larger interval of $C$ intact (albeit shifted over finitely much) to ensure that the upper density is $1$. (See Figure \ref{hyperimmunefigure}.) Formally, we shall define c.e. sets $H_i$ and gaps $[u_i, u_i+m_i]$ inductively. Let $H_0=C$. Enumerate $H_0$ until there is a stage $s$ and a number $n$ such that we see $\rho_n(H_0)>\frac{1}{2}$, which exists because $C=H_0$ has upper density $1$. Then let $u_0=n$ and let $m_0$ be the least natural number such that $\frac{u_0}{u_0+m_0}<\frac{1}{2}$.
    \\
    \\
    Given $H_{e}$ and $[u_e,u_e+m_e]$, define $H_{e+1}$ and $[u_{e+1},u_{e+1}+m_{e+1}]$ as follows: Define $H_{e+1}=(H_e\upharpoonright u_e)\cup (H_e^{\geq u_e}+m_e)$. (For convenience, here $X^{\geq k}$ denotes $\{n\in X: n\geq k\}$, and $X+m=\{n+m:n\in X\}$.) Enumerate $H_{e+1}$ until there is a stage $s$ and a number $n>u_e+m_e$ such that $\rho_n(H_{e+1,s})>1-\frac{1}{e+2}$. Then set $u_{e+1}=n$ and $m_{e+1}$ to be the least natural number such that $\frac{u_{e+1}}{u_{e+1}+m_{e+1}}<\frac{1}{e+2}$. Finally, let $H$ be the set with characteristic function $H(m)=\lim_{n\to\infty} H_n(m)$. Note, first off, that $\bigcup_{e\in\omega} [u_e,u_e+m_e]$ is a c.e. set with increasing enumeration, and hence computable. Furthermore, note that $H$ itself is c.e., as $\lim_{n\to\infty} H_n(m)=H_s(m)$ for any $s$ with $u_s>m$. $\underline{\rho}(H)=0$ as desired, as $\rho_{u_i+m_i}(H)<\frac{1}{i+2}$ for all $i$.
    \\
    \\
    $H$ itself will not work as the desired $W$: The complement contains the computable subset $\bigcup_{e\in\omega} [u_e,u_e+m_e]$, so it is not even immune, let alone hyperimmune. Therefore, let $W=H\cup\bigcup_{n\in C} [u_n,u_n+m_n]$: that is, enumerate the $n$-th gap into $W$ whenever $n$ enters $C$. Then $W$ is c.e., and we claim that it is hypersimple.
    \\
    \\
    Recall that the principal function $p_A:\omega\to A$ of a set $A=\{a_0<a_1<a_2<\dots\}$ is the function such that $p_A(n)=a_n$. Also recall that a set is hyperimmune if and only if its principal function is not computably bounded. Suppose that $\overline{W}$ is not hyperimmune. Then it is bounded by some total computable function $f$. However, the total computable function $g$ defined via $g(n)=f(n+\mathop{\Sigma}_{i\leq n} m_i)$ must bound $\overline{C}$: The elements of $\overline{W}$ are the elements of $\overline{C}$ shifted up along with the corresponding gaps. The $n$-th element of $\overline{C}$ is smaller than the $n$-th non-gap element of $\overline{W}$ (as the $n$-th non-gap element of $\overline{W}$ is the $n$-th element of $\overline{C}$ shifted up by the gaps below it), which is at most the $n+\mathop{\Sigma}_{i\leq n} m_i$-th element of $\overline{M}$ because a gap in $\overline{W}$ corresponds to an element of $\overline{C}$ below the gap.
    \\
    \\
    Thus we have shown that $W$ is a hypersimple set. It is Turing equivalent to $C$ because $\bigcup_{e\in\omega} [u_e,u_e+m_e]$ is computable: $W$ can compute $C$ by ignoring the intervals, and $C$ can clearly compute $H$ and hence $W$.
\end{proof}

\begin{figure}
\begin{center}
\begin{tikzpicture}
    \draw[thin] (-1,0) -- (-1,0) node[anchor=north]{$0$} node[anchor=south]{$H_0^{\geq 0}+0$};
    \draw[ultra thick] (-1,0) -- (10,0) node[anchor=west]{$C=H_0$};
    
    \draw[ultra thick] (-1,-2) -- (1,-2) node[anchor=north]{$u_0$}
    node[anchor=south]{$H_0\upharpoonright u_0$};
    \draw[thin] (1,-2) -- (3,-2) node[anchor=north]{$u_0+m_0$}
    node[anchor=south]{$H_0^{\geq u_0}+m_0$};
    \draw[ultra thick] (3,-2) -- (10,-2) node[anchor=west]{$H_1$};
    
    \draw[ultra thick] (-1,-4) -- (1,-4);
    \draw[thin] (1,-4) -- (3,-4);
    \draw[ultra thick] (3,-4) -- (5.5,-4) node[anchor=north]{$u_1$}
    node[anchor=south]{$H_1\upharpoonright u_1$};
    \draw[thin] (5.5,-4) -- (9,-4)
    node[anchor=north]{$u_1+m_1$} node[anchor=south]{$H_1^{\geq u_1}+m_1$};
    \draw[ultra thick] (9,-4) -- (10,-4) node[anchor=west]{$H_2$};
    
    \node at (5, -5) {.};
    \node at (5,-5.1) {.};
    \node at (5,-5.22) {.};
    
    \node at (10.5, -5) {.};
    \node at (10.5,-5.1) {.};
    \node at (10.5,-5.22) {.};
    
\end{tikzpicture}
\caption{Visualization of the construction of $H$ in Theorem \ref{hypersimple}}
\label{hyperimmunefigure}
\end{center}
\end{figure}
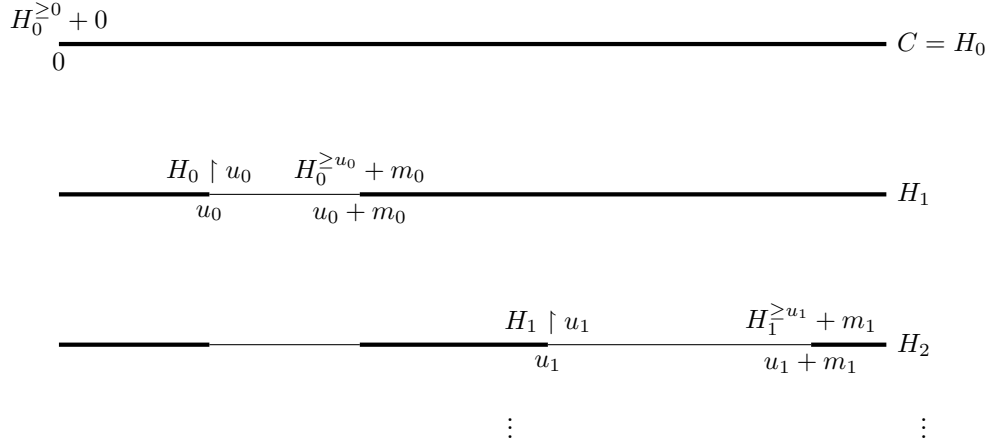

By using $C$ as an oracle rather than an enumeration of $C$, it is clear that this result also applies to co-hyperimmune sets in general, not just hypersimple sets.
\\
\\
Perhaps the most useful characterization of the hyperimmune sets is that a set is hyperimmune if and only if its principle function is not computably bounded. Recall that the principle function $p_X$ of an infinite set $X=\{x_0<x_1<x_2<x_3<\dots\}$ is the function such that $p_X(n)=x_n$. While Theorem \ref{hypersimple} shows that hyperimmunity and intrinsic smallness are unrelated notions of smallness, we would like to know whether it is possible to provide a simple characterization of intrinsic smallness using principal functions. Perhaps the most natural candidate is that of weak computable traceability from \cite{intrinsicsmallness}, which does provide us with a useful test for intrinsic smallness:

\begin{lemma}
\label{principalfunction}
    Suppose that $A$ is not intrinsically small. Then the principle function $p_A(n)$ of $A$ is weakly computably traced, i.e. there are computable functions $g$ and $h$ with $|D_{g(n)}|\leq h(n)$ for all $n$ and $p_A(n)\in D_{g(n)}$ for infinitely many $n$.
\end{lemma}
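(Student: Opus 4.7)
The plan is to unpack the hypothesis. Since $A$ is not intrinsically small, there is a computable permutation $\pi$ with $\overline{\rho}(\pi(A))>0$. Fix a positive rational $\varepsilon<\overline{\rho}(\pi(A))$, so that the set $N:=\{n:|\pi(A)\cap[0,n)|\geq \varepsilon n\}$ is infinite. Equivalently, for each $n\in N$ the set $C_n:=A\cap \pi^{-1}[0,n)$ has cardinality at least $\lceil \varepsilon n\rceil$.

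The trace I would propose is $D_{g(k)}:=\pi^{-1}[0,n_k)$, where $n_k:=\lceil (k+1)/\varepsilon\rceil$, bounded in size by $h(k):=n_k$. Both $g$ and $h$ are manifestly computable (since $\pi$ is), and $|D_{g(k)}|=n_k=h(k)$ holds by inspection because $\pi$ is a bijection.

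The key verification is that $p_A(k)\in D_{g(k)}$ for infinitely many $k$. Fix $n\in N$ and enumerate $C_n=\{p_A(k_0)<p_A(k_1)<\dots<p_A(k_{m-1})\}$ with $m\geq \lceil \varepsilon n\rceil$. Since the $k_i$ are strictly increasing non-negative integers, a pigeonhole bound gives $k_i\geq i$. Setting $k:=k_{\lceil\varepsilon n\rceil-1}$ we obtain $k\geq \lceil\varepsilon n\rceil-1$, so $n\leq (k+1)/\varepsilon\leq n_k$, and on the other hand $p_A(k)\in C_n\subseteq \pi^{-1}[0,n)\subseteq \pi^{-1}[0,n_k)=D_{g(k)}$. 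As $n$ ranges over $N$, the associated indices $k$ tend to infinity (since $k\geq \lceil\varepsilon n\rceil -1$), yielding the required infinite collection.

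The main obstacle I expect is resisting the temptation to identify the first $\lceil\varepsilon n\rceil$ elements of $A$ with the ones landing in $\pi^{-1}[0,n)$; this identification is false whenever $\pi^{-1}[0,n)$ is not an initial segment of $\omega$, and indeed we have no computable handle on which specific indices $k$ satisfy $p_A(k)\in \pi^{-1}[0,n)$. The pigeonhole bound $k_i\geq i$ bypasses this: it extracts at least one index $k$ (namely $k_{\lceil\varepsilon n\rceil-1}$) whose size is controlled in terms of $n$, even though we cannot name $k$ uniformly from $n$. This is exactly why the result produces only a \emph{weak} trace (infinitely often) rather than a trace of every value of the principal function.
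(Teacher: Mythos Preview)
Your proof is correct and shares the paper's core idea of setting $D_{g(k)} = \pi^{-1}[0, m_k)$ for a suitable computable sequence $(m_k)$, but the executions differ in an interesting way. The paper takes $m_k = k!$ and argues by contradiction: if $p_A(k) \notin D_{g(k)}$ for cofinitely many $k$, then $\pi(p_A(k)) \geq k!$ cofinitely, and a short count shows this forces $\overline{\rho}(\pi(A)) = 0$. You instead fix a rational $\varepsilon < \overline{\rho}(\pi(A))$, take $m_k = \lceil (k+1)/\varepsilon \rceil$, and argue directly via the pigeonhole bound $k_i \geq i$. Your size bound $h(k)\sim k/\varepsilon$ is far tighter than the paper's $h(k) = k!$, but it depends on $A$ through $\varepsilon$, whereas the paper's $h$ is a single function that works uniformly for every set that fails to be intrinsically small. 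That uniformity is precisely what the paper exploits in the discussion immediately following the lemma, where it isolates the question of whether weak tracing specifically by $h = \lambda n(n!)$ characterizes intrinsic smallness; your argument, as written, does not speak to that refinement.
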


\begin{proof}
    As $A$ is not intrinsically small, there is a computable permutation $\pi$ such that $\overline{\rho}(\pi(A))=q>0$. Define functions $h=\lambda n(n!)$ and $g$ such that $D_{g(n)}=\pi^{-1}([0,n!))$. Then we claim that $g$ and $h$ witness that $p_A$ is weakly computably traced. 
    \\
    \\
    To get a contradiction, suppose this is not the case. Then $p_A(k)\in D_{g(k)}=\pi^{-1}([0,k!))$ for only finitely many $k$. In particular, $\pi(n)\geq n!$ for all but finitely many $n\in A$. This clearly implies that $\overline{\rho}(\pi(A))=0$, however, as $\rho_n(\pi(A))\leq \frac{s+m+1}{m!}$ where $s$ is the number of $k$ for which $p_A(k)\in \pi^{-1}([0,k!))$ and $m$ is the largest number with $m!\leq n$. As $\frac{s+m+1}{m!}$ approaches $0$ in the limit, this contradicts the fact that $\overline{\rho}(\pi(A))=q>0$, so $g$ and $h$ must witness that $p_A$ is weakly computably traced.
\end{proof}

The contrapositive of Lemma \ref{principalfunction} tells us that if the principle function of $A$ is not weakly computably traced, then $A$ is intrinsically small. Unfortunately, Theorem \ref{infinite} tells us that we cannot hope to reverse this in general. However, notice that the proof in fact proves a stronger statement: If $A$ is not intrinsically small, then it is weakly computably traced with witness $h=\lambda n(n!)$. That is, if $p_A$ is not weakly computably traced by $h$, then $A$ is intrinsically small. If this can be reversed, that would characterize the intrinsically small sets.

\begin{question}
Is it the case that if $A$ is intrinsically small, then $p_A$ is not weakly computably traced by $h=\lambda n(n!)$? If it is not the case, is there an intrinsically small set which does not dominate $h$? (I.e. $p_A(n)\leq n!$ infinitely often?)
\end{question}

Of course there are computably dominated intrinsically small sets by Theorem \ref{infinite}, however it is not clear if there are any ``nice'' computable functions (i.e. something naturally occurring in arithmetic or combinatorics) which dominate an intrinsically small set, or even which are not dominated by the principal function of one. Our usual strategy for constructing intrinsically small sets is no help, as it requires arbitrarily large witnesses.
\section{Relative Intrinsic Smallness}

The definition of intrinsic density, and by extension the definition of intrinsic smallness, admits a natural relativization:

\begin{definition}
\label{relativized}
The $X$-absolute upper density of $A\subseteq\omega$ is
\[\overline{P_X}(A)=\sup\{\overline{\rho}(\pi(A)):\pi\text{ an }X\text{ computable permutation}\}\]
and the absolute lower density of $A$ is
\[\underline{P_X}(A)=\inf\{\underline{\rho}(\pi(A)):\pi\text{ an }X\text{ computable permutation}\}.\]
If $\overline{P_X}(A)=\underline{P_X}(A)$, then we call this limit the $X$-intrinsic density of $A$ and denote it by $P_X(A)$.
\end{definition}

It is easy to see that no infinite, co-infinite set $A$ is $A$-intrinsically small, or in fact has $A$-intrinsic density. (One way to observe this is to note that the permutation taking $A$ to the set $W$ in $deg(A)$ from Lemma \ref{degree} is $A$-computable.) Furthermore, given a set $A$, the set of Turing degrees for which $A$ is not intrinsically small is closed upwards and contains the cone above $A$. One may ask if a set is intrinsically small, is it the case that this set is exactly the cone above $A$? The answer is no.

\begin{lemma}
    There is an intrinsically small set $A$ and a permutation $\pi\not\geq_T A$ such that $\overline{\rho}(\pi(A))>0$.
\end{lemma}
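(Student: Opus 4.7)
The plan is to exhibit $A$ as the join $B \oplus C$ of two carefully chosen intrinsically small sets with $C \not\leq_T B$, and to take $\pi$ to be the order-preserving ``sorting'' permutation determined by the even half of $A$. To produce the ingredients I would first apply Lemma~\ref{jumpstrategy} with $X = \emptyset$ and the trivial family $\mathcal{R}_e = \omega$ to obtain an intrinsically small set $B \leq_T \emptyset'$. Since $B' \geq_T \emptyset'$ is a DNC degree, the Astor / Kjos-Hanssen-Merkle-Stephan characterization already invoked in the proof of Theorem~\ref{infinite} furnishes an intrinsically small set $C$ of degree $B'$; in particular $C \not\leq_T B$ by the jump theorem. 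Setting
\[
A = B \oplus C = \{2b : b \in B\} \cup \{2c+1 : c \in C\},
\]
Corollary~\ref{join} guarantees that $A$ is intrinsically small.

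Next I would let $E = A \cap 2\omega = 2B$ and define $\pi \colon \omega \to \omega$ to be the unique permutation that maps $E$ order-preservingly onto $2\omega$ and $\omega \setminus E$ order-preservingly onto $2\omega+1$. Since $B$ is infinite, $E$ is infinite and coinfinite, so $\pi$ is well-defined. Moreover $n \in E$ if and only if $\pi(n) \in 2\omega$, so $\pi \equiv_T E \equiv_T B$.

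With these in hand the verifications are immediate. If $\pi \geq_T A = B \oplus C$ then in particular $B \equiv_T \pi \geq_T C$, contradicting $C \not\leq_T B$; hence $\pi \not\geq_T A$. On the other hand $E \subseteq A$ forces $\pi(A) \supseteq \pi(E) = 2\omega$, so $\overline{\rho}(\pi(A)) \geq \tfrac{1}{2} > 0$.

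The main obstacle is really just the first step, namely locating an intrinsically small $C$ that does not Turing-reduce to the given intrinsically small $B$; this is handled cleanly by the known fact that every DNC degree contains an intrinsically small set, applied to $B'$. After that, the join preservation result (Corollary~\ref{join}) and the elementary observation that order-preservingly sorting the even part of $A$ dumps all of $E$ onto $2\omega$ deliver both conclusions with essentially no further work.
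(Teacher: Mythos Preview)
Your proof is correct and follows essentially the same strategy as the paper: take $A = B \oplus C$ with $B$, $C$ intrinsically small and $C \not\leq_T B$, invoke Corollary~\ref{join}, and use a $B$-computable permutation that sends $2B$ to a set of positive density. The paper's version differs only cosmetically---it takes $B$ and $C$ to be Turing-incomparable intrinsically small sets (existence cited directly from Astor's high-or-DNC characterization, rather than building $B\leq_T\emptyset'$ and $C\equiv_T B'$ as you do) and maps $\{2n:n\in B\}$ to the non-factorials instead of the evens---but the argument is the same.
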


\begin{proof}
    Let $B$ and $C$ be Turing incomparable intrinsically small sets. (These exist given the result of Astor that the degrees containing intrinsically small sets are the degrees which are high or DNC.) Then by Corollary \ref{join}, $A=B\oplus C$ is intrinsically small. Now let $\pi$ be the $B$-computable permutation mapping $\{2n:n\in B\}$ to the non-factorials and the complement to the factorials. Then $\pi(B\oplus C)$ contains the non-factorials, and therefore has density $1$.
\end{proof}

As a corollary, we see that given an intrinsically small set $A$, the set of $X$ for which $A$ is $X$-intrinsically small need not be the degrees strictly below $A$: As $B$ and $C$ in the above proof are Turing incomparable, $B\oplus C$ is strictly Turing above $B$, but is not intrinsically small relative to $B$. However, it is clear that given a set $A$, the collection of Turing degrees of $X$ with $A$ $X$-intrinsically msall is closed downwards.. Must it be a Turing ideal? The following lemma shows the answer is no.

\begin{lemma}
    There is an intrinsically small set $A$ and sets $B,C$ with $A$ $B$-intrinsically small and $C$-intrinsically small but not $B\oplus C$-intrinsically small. That is, the set of $X$ for which $A$ is $X$-intrinsically small is not a Turing ideal.
\end{lemma}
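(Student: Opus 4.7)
The plan is to build $A$, $B$, $C$ simultaneously via a priority construction with oracle $\emptyset''$, generalizing the strategy of Lemma~\ref{jumpstrategy}. We diagonalize against partial computable, partial $B$-computable, and partial $C$-computable injective functions to make $A$ intrinsically small with respect to each of $\emptyset$, $B$, and $C$; simultaneously, we encode $A$'s characteristic function into $B \oplus C$ via a bit-XOR so that $A \leq_T B \oplus C$, making $A$ fail to be $(B \oplus C)$-intrinsically small.

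For the encoding, fix a sufficiently fast-growing computable function $p$ (with $p(n)$ chosen larger than any threshold set by the diagonalization in earlier stages). For each $n$, set the bit $B(p(n))$ freely (subject to the diagonalization constraints discussed below) and define $C(p(n))$ so that $B(p(n)) \oplus C(p(n)) = A(n)$; elsewhere let $B$ and $C$ take the value $0$. Then $B \oplus C$ computes $A$ by reading off XORs at the positions $p(n)$, so the $(B \oplus C)$-computable permutation sending the $n$th element of $A$ to $2n$ maps $A$ onto the even numbers, witnessing that $A$ is not $(B \oplus C)$-intrinsically small.

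The diagonalization mirrors Lemma~\ref{jumpstrategy}: when choosing the $s$th element $a_s$ of $A$, use $\emptyset''$ to determine which of the computable, $B$-computable, or $C$-computable partial injective functions indexed $\leq s$ are ``suitable'' at stage $s$, and require $a_s$ to be large enough that each such function $\varphi$ satisfies $\varphi(a_s) > 2\max(\varphi(A_s))$. Because $B$ and $C$ are being built simultaneously, we reason about $B$- and $C$-computable functions using the currently committed finite portions of $B, C$ and the $\emptyset''$ oracle. This ensures each such partial function, if it is in fact a permutation, sends $A$ to a set of density zero, just as in the proof of Lemma~\ref{jumpstrategy}.

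The main obstacle is the interaction between the encoding and the diagonalization: we must ensure that the free bit $B(p(n))$ is chosen so as not to let $B$ alone (or, by symmetry of the XOR, $C$ alone) recover an infinite subset of $A$, since such a subset would immediately break $B$- or $C$-intrinsic smallness via a permutation sending it to the evens (cf.\ the discussion of immunity following Theorem~\ref{injective}). We handle this by selecting $B(p(n))$ to diagonalize against any $B$-computable procedure attempting to predict $A(n)$ using only the bits of $B$ below $p(n)$; by symmetry of XOR, this same choice blocks analogous $C$-computable prediction. The priority framework then coordinates the three diagonalization streams with the simultaneous construction of $B$ and $C$, and the fact that the encoding positions $p(n)$ lie far above the witnesses used at earlier stages ensures the two kinds of requirements do not interfere.
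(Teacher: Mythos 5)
Your proposal takes a genuinely different and substantially harder route than the paper, and as sketched it contains gaps that are not easily patched. The paper's proof sidesteps the entire priority construction you are trying to set up: it applies the Sacks Splitting Theorem as a black box to obtain \emph{low} c.e.\ sets $B$ and $C$ with $B\oplus C\equiv_T\emptyset'$, then runs the modification of Lemma~\ref{jumpstrategy} with $\emptyset'$ as oracle (which suffices because $B'\equiv_T C'\equiv_T\emptyset'$, so $\emptyset'$ can enumerate and test suitability of the $B$- and $C$-computable injective partial functions), producing $A\leq_T\emptyset'$. The failure of $(B\oplus C)$-intrinsic smallness is then automatic from $A\leq_T\emptyset'\equiv_T B\oplus C$, with no encoding needed.

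The first problem with your approach is a circularity you acknowledge but do not resolve. The suitability test from Lemma~\ref{jumpstrategy} asks finitely many convergence questions about $\varphi_e^B$ and $\varphi_e^C$, and those answers depend on the \emph{entire} oracle, not on the ``currently committed finite portions'' of $B$ and $C$: a computation that diverges on every finite initial segment committed so far may converge once later bits are decided, and those later bits in turn depend on choices of $A(n)$ and on the XOR encoding, which is what you are trying to decide. The standard way out is the restraint/length-of-agreement machinery from Sacks splitting, but you have not specified it, and it must be reconciled with the fact that you are choosing bits of $B$ freely (not enumerating a c.e.\ set), which changes the usual injury analysis.

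The second problem is oracle strength. If the construction is $\emptyset''$-computable then $B,C\leq_T\emptyset''$, so the suitability questions about $\varphi_e^B$ live at the level of $B'$, which a priori is only bounded by $\emptyset'''$. Unless the construction actively forces $B$ and $C$ to be low (or at least low over $\emptyset''$) — which is exactly what Sacks Splitting delivers for free — $\emptyset''$ is not enough to answer the questions you need answered. Relatedly, the clause ``select $B(p(n))$ to diagonalize against any $B$-computable procedure attempting to predict $A(n)$'' is doing the work of proving $A\not\leq_T B$ and $A\not\leq_T C$ simultaneously with the encoding and the intrinsic-smallness diagonalization; this is essentially re-deriving a splitting theorem inside your construction, and it needs a concrete priority argument rather than a one-line appeal to diagonalization. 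In short: the plan is not obviously hopeless, but it is attempting from scratch what the paper gets from a single citation, and the sketch leaves the hard parts unaddressed.
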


\begin{proof}
    By the Sacks Splitting Theorem \cite{sacks}, there are low sets $B$ and $C$ such that $B\oplus C\equiv_T \emptyset'$. Therefore a modification of Lemma \ref{jumpstrategy} allows us to obtain a set $A\leq\emptyset'$ which is both $B$-intrinsically small and $C$-intrinsically small. (As $B$ and $C$ are low, $B'\equiv_T C'\equiv_T \emptyset'$, so $\emptyset'$ can enumerate the partial $B$ and $C$ computable injective functions and determine suitability for them.) However, $A$ cannot be $B\oplus C$-intrinsically small because $A\leq_T\emptyset'\equiv_T B\oplus C$.
\end{proof}

Note that although the set of $X$ for which $A$ is $X$-intrinsically small need not be a Turing ideal, Definition \ref{relativized} still makes sense if one considers all $\mathcal{I}$-computable permutations in a Turing ideal $\mathcal{I}$ rather than computable in a set $X$.
\\
\\
The following lemma allows us to describe the degrees of $X$-intrinsically small sets for certain $X$.

\begin{lemma}
\label{relativizedhighdnc}
    Let $X$ be an arithmetical set. Then the Turing degrees which contain an $X$-intrinsically small set $A$ are the $X$-high or $X$-DNC degrees.
\end{lemma}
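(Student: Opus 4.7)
The plan is to relativize to $X$ the two results that together gave the unrelativized version cited in the proof of Theorem \ref{infinite}. Recall that proof combined Astor's characterization of the degrees containing an intrinsically small set (exactly those not weakly computably traceable) with the Kjos-Hanssen--Merkle--Stephan characterization of those degrees as the High or DNC degrees. The goal is to check that each equivalence continues to hold when every notion is relativized to $X$, yielding: a degree contains an $X$-intrinsically small set iff it is not weakly $X$-computably traceable iff it is $X$-high or $X$-DNC.

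First I would verify that Astor's equivalence relativizes to $X$. The forward direction is essentially Lemma \ref{principalfunction} reinterpreted for $X$-computable permutations: if $p_A$ is weakly $X$-computably traced by an $X$-computable pair $(g,h)$, then the same permutation constructed in the unrelativized proof now becomes an $X$-computable permutation witnessing that $A$ is not $X$-intrinsically small. The reverse direction mirrors Lemma \ref{jumpstrategy}, replacing $\emptyset'$ with $(X\oplus\emptyset)'$ and enumerating $X$-partial computable injective functions in place of partial computable ones; the arithmetical hypothesis on $X$ ensures that enumeration and the suitability tests for $X$-computable permutations remain arithmetically controlled, producing an $X$-intrinsically small set inside each non-weakly-$X$-computably-traceable degree below $(X\oplus\emptyset)'$, and the degree-by-degree construction of Astor relativizes the same way.

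Second I would relativize the Kjos-Hanssen--Merkle--Stephan theorem. Their proof splits into two pieces: a non-hyperimmune degree is weakly computably traceable, and a hyperimmune degree fails to be weakly computably traceable exactly when it is High or DNC. Both pieces relativize uniformly by replacing every occurrence of ``computable'' by ``$X$-computable,'' ``hyperimmune'' by ``$X$-hyperimmune,'' ``High'' by ``$X$-high,'' and ``DNC'' by ``$X$-DNC''; the arithmetical hypothesis on $X$ again guarantees that the basis theorems, majorization arguments, and approximations used in their proof do not run into new definability obstacles. Chaining the two relativized equivalences yields the statement of the lemma.

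The main obstacle is the bookkeeping for Astor's reverse direction, since the construction needs to enumerate the $X$-partial computable injective functions (an $(X\oplus\emptyset)'$-task) and simultaneously defeat every $X$-computable permutation while staying in the prescribed degree. Lemma \ref{jumpstrategy} already supplies the template in the case where $X$ is just an auxiliary oracle; the extra work is to redo Astor's full degree-theoretic argument over $X$ rather than over $\emptyset$, and the arithmetical assumption on $X$ is what keeps all of $X$, $X'$, $X''$, \ldots available in the arithmetical hierarchy so that the priority-style diagonalization against $X$-computable permutations behaves exactly as in the unrelativized case. Once that verification is complete, the lemma follows by stringing together the two relativized equivalences.
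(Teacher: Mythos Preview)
Your overall plan matches the paper's: relativize Astor's characterization (degrees containing an intrinsically small set are exactly the non--weakly-computably-traceable degrees) and the Kjos-Hanssen--Merkle--Stephan characterization, then chain them. Where you diverge from the paper is in your account of \emph{why} the arithmetical hypothesis on $X$ is needed, and this is not a cosmetic difference.

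The paper notes explicitly (see the paragraph following the lemma) that the relativizations of Astor's Corollary 2.5 and Theorem 2.6, and of the Kjos-Hanssen--Merkle--Stephan theorem, go through for \emph{any} $X$; the arithmetical assumption plays no role in either the ``definability obstacles'' of the KMS argument or the suitability tests in the Lemma~\ref{jumpstrategy} construction that you flag. The actual bottleneck is upward closure: from ``$\mathbf{d}$ is not weakly $X$-computably traceable'' one only gets that $\mathbf{d}$ \emph{computes} an $X$-intrinsically small set, not that $\mathbf{d}$ \emph{contains} one. The paper closes this gap by invoking Jockusch's theorem that if a property is closed under subsets and has an arithmetical witness, then the degrees containing a set with that property are upward closed. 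The relativized Lemma~\ref{jumpstrategy} produces an $X'$-computable $X$-intrinsically small set; when $X$ is arithmetical this witness is arithmetical, and Jockusch applies. Your proposal never mentions Jockusch's result, and your phrase ``the degree-by-degree construction of Astor'' does not correspond to anything in Astor's actual argument as the paper describes it. So while your skeleton is right, the one step that genuinely requires $X$ to be arithmetical is missing from your outline, and the justifications you do give for that hypothesis are not the operative ones.
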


\begin{proof}
    We merely need to check that the proof of Corollary 2.7 from Astor \cite{intrinsicsmallness} relativizes. It is straightforward to check that the proof given by Downey and Hirschfeldt \cite{downeyhirschfeldt} of the result of Kjos-Hanssen, Merkle, and Stephan \cite{highordnc} relativizes: a set $A$ is $X$-weakly computably traceable if and only if it is $X$-high or $X$-DNC. 
    \\
    \\
    Using this, the rest of the proof of \cite{intrinsicsmallness} Theorem 2.4 relativizes, and therefore \cite{intrinsicsmallness} Corollary 2.5 does as well. \cite{intrinsicsmallness} Theorem 2.6 also relativizes, which is straightforward to check. To obtain \cite{intrinsicsmallness} Corollary 2.7, Astor employs the following result of Jockusch \cite{jockusch}: Given some property $P$ of some sets of natural numbers, if there is an arithmetical set exhibiting $P$ and $P$ is closed under taking subsets, then the collection of Turing degrees which contain a set exhibiting $P$ is closed upwards. The relativized form of Lemma \ref{jumpstrategy} above yields an $X'$-computable $X$-intrinsically small set $A$. As $X$ is arithmetical, $A$ is arithmetical, so we may apply the result of Jockusch to obtain the relativized form of \cite{intrinsicsmallness} Corollary 2.7. 
\end{proof}

There is an obvious gap in Lemma \ref{relativizedhighdnc}. Specifically, can the arithmetical requirement on $X$ be dropped? There are certainly sets $X$ for which there are no arithmetical $X$-intrinsically small sets $A$: If $X=\emptyset^{(\omega)}$, then $X$ computes every arithmetical set and therefore there cannot be an arithmetical $X$-intrinsically small set. An important note here is that the relativization of \cite{intrinsicsmallness} Corollary 2.5 and Theorem 2.6 did not rely on the fact that $X$ was arithmetical, so we already know that $X$-weakly computably traced sets are not $X$-intrinsically small and that any non-$X$-weakly computably traced set computes an $X$-intrinsically small set for even non-arithmetical $X$.

\begin{question}
For which non-arithmetical sets $X$ are the degrees containing an $X$-intrinsically small set those which are $X$-high or $X$-DNC? For which non-arithmetical $X$ are they upwards closed?
\end{question}

A natural question arises from the appearance of $\emptyset^{(\omega)}$: We say a set $A$ is arithmetically intrinsically small if it is $X$-intrinsically small for every arithmetical set $X$. Is there an arithmetically intrinsically small set which is not $\emptyset^{(\omega)}$-intrinsically small?
It turns out that the answer is yes, as $\emptyset^{(\omega)}$ can uniformly compute all of the arithmetical permutations. Therefore a modification of Lemma \ref{jumpstrategy} allows us to construct a $\emptyset^{(\omega)}$-computable set which is arithmetically intrinsically small.

\section{Intrinsic Computability}

Having studied intrinsically small sets, we now turn our attention to their use as error sets in ``almost computable'' settings. Astor \cite{intrinsicdensity} first described four possible variations of ``intrinsic'' generic computability, that is ``intrinsic'' generic descriptions of $A$ which ensure the existence of generic descriptions of $\varphi_e(A)$ for all $e\in Perm$. The four notions differ by how uniformly we can obtain a generic description for a given permutation. We provide the generalizations of each of these notions to the remaining three notions of asymptotic computability mentioned in Section 1, which gives us a total of sixteen separate notions. Throughout this section $x$ will denote an arbitrary element of $\{\text{effective dense, generic, coarse, dense}\}$. We shall begin by describing the strongest of the four notions, which is the most overtly related to our study of intrinsically small sets.
\begin{definition}
$A\subseteq \omega$ is intrinsically $x$-ly computable if there is an $x$ description of $A$ with an intrinsically small error set.
\end{definition}

Astor originally defined this notion as strongly intrinsically $x$-ly computable, however we shorten the definition for the sake of readability.
\\
\\
This is the most natural intrinsic variant of asymptotic computability, as it is obtained by simply requiring the error set to meet a stronger smallness condition. As we shall see, the other three notions introduced in \cite{intrinsicsmallness} are not obtained by simply modifying the error set, but rather by introducing new restrictions on the computation.
\\
\\
We should verify that the intrinsically $x$-ly computable sets are not just the computable sets: clearly the computable sets meet this definition for any $x$, but are there noncomputable examples? It turns out that for the strongest notion, intrinsically effectively densely computable sets, this is not the case:

\begin{lemma}
    Suppose that $A$ is intrinsically effectively densely computable. Then $A$ is computable.
\end{lemma}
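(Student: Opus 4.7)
The plan is to exploit the observation that intrinsic effective dense computability forces the error set to be both \emph{computable} and \emph{intrinsically small}, and then show these two properties together force the error set to be finite.

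First I would unpack the definitions. Suppose $\varphi_e$ witnesses that $A$ is intrinsically effectively densely computable, and let $E = \varphi_e^{-1}(\{\square\})$ be its error set. Because $\varphi_e$ is total computable with codomain $\{0,1,\square\}$, the set $E$ is computable. By the defining condition $\underline{\rho}(\varphi_e^{-1}(\{0,1\})) = 1$, its complement $\omega \setminus E$ has lower density $1$, and so in particular $\omega \setminus E$ is infinite. By hypothesis, $E$ is intrinsically small.

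The key step is to show that $E$ must be finite. If $E$ were infinite, then since $E$ is computable and $\omega \setminus E$ is also infinite and computable, I could fix computable increasing enumerations of both $E$ and $\omega \setminus E$ and define a computable permutation $\pi : \omega \to \omega$ by sending the $n$-th element of $E$ to $2n$ and the $n$-th element of $\omega \setminus E$ to $2n+1$. Then $\pi(E)$ is exactly the set of even numbers, which has density $\tfrac{1}{2}$. This contradicts intrinsic smallness of $E$, since $\overline{\rho}(\pi(E)) = \tfrac{1}{2} > 0$.

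Having shown $E$ is finite, I conclude the proof by noting that $A(n) = \varphi_e(n)$ for all $n \in \omega \setminus E$, which is cofinite. Defining a total function that agrees with $\varphi_e$ on $\omega \setminus E$ and with $A$ on the finite set $E$ gives a computable characteristic function for $A$, so $A$ is computable. The argument really has no hard step: once one recognizes that an effective dense description has a \emph{computable} error set, the only obstacle is producing the permutation witnessing that infinite computable co-infinite sets fail intrinsic smallness, which is immediate from matching up the computable enumerations of $E$ and $\omega \setminus E$ with the evens and odds.
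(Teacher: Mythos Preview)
Your proposal is correct and follows essentially the same approach as the paper: observe that the error set is both computable and intrinsically small, show that no infinite computable co-infinite set can be intrinsically small by exhibiting an explicit computable permutation, and conclude that the error set is finite so $A$ is computable. The only cosmetic difference is that the paper maps the error set to the nonfactorials (density $1$) rather than to the evens (density $\tfrac12$), but either permutation works equally well.
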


\begin{proof}
    By definition, if $A$ is intrinsically effectively densely computable, then the error set is an intrinsically small computable set. However, no infinite computable set can be intrinsically small, as there is a computable permutation that maps it to the nonfactorials and its complement to the factorials. Therefore, the error set must be finite. As $A$ differs from a computable set by only finitely much, it must be computable.
\end{proof}

Fortunately, the other three do admit noncomputable examples. For generic computability, as mentioned in \cite{intrinsicsmallness}, any c.e. set with intrinsic density $1$, such as a maximal set, is intrinsically generically computable. Similarly, any set of intrinsic density $1$ or $0$ is intrinsically coarsely computable. Notice that any intrinsically generically computable set with defined intrinsic density must have intrinsic density $0$ or $1$ and thus be intrinsically coarsely computable: Let $\varphi_e$ be an intrinsic generic description of $A$. If $\{n:\varphi_e(n)\downarrow=1\}$ is finite, then $A$ has intrinsic density $0$ because $A=\{n:\varphi_e(n)\downarrow=1\}\cup(A\cap\overline{W_e})$ is a union of a finite set with an intrinsically small set. If this set is not finite, then it is an infinite c.e. subset of $A$. Therefore the absolute upper density of $A$ is $1$ because every infinite c.e. set has a computable subset, which can be mapped to the nonfactorials by a computable permutation. As $A$ has defined intrinsic density and its absolute upper density is $1$, it must have intrinsic density $1$. In both cases, $A$ is intrinsically coarsely computable. The following lemma shows that the intrsincially generically computabile sets and the intrinsically coarsely computable sets are not the same, however.

\begin{lemma}
\label{coarseseparation}
    There is a intrinsically coarsely computable set which is not intrinsically generically computable.
\end{lemma}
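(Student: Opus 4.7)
The plan is to construct an intrinsically small set $A$ that is also \emph{bi-immune}, i.e., whose complement $\overline{A}$ is immune. Given such an $A$, intrinsic coarse computability holds trivially: the constant-$0$ total function serves as a coarse description with error set $A$, which is intrinsically small by assumption. The failure of intrinsic generic computability then follows by a short argument. Suppose, for contradiction, that $\varphi_e$ were an intrinsic generic description of $A$, so that $W_e$ has intrinsic density $1$. Split $W_e$ into the c.e.\ sets $W_e^1 = \{n : \varphi_e(n)\downarrow = 1\} \subseteq A$ and $W_e^0 = \{n : \varphi_e(n)\downarrow = 0\} \subseteq \overline{A}$. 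Each is a c.e.\ subset of an immune set and is therefore finite; note that the immunity of $A$ itself follows automatically from intrinsic smallness, since any infinite c.e.\ subset of $A$ contains an infinite computable subset $H$, and a computable permutation mapping $H$ onto the non-factorials would force $\overline{\rho}(\pi(A)) = 1$. But then $W_e = W_e^0 \cup W_e^1$ is finite, contradicting the assumption that $W_e$ has intrinsic density $1$.

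To produce such an $A$, I would apply Lemma \ref{jumpstrategy} with oracle $X = \emptyset''$. Since the predicate ``$W_e$ is infinite'' is $\Pi_2^0$, the oracle $\emptyset''$ uniformly enumerates indices $e_0, e_1, \ldots$ of all infinite c.e.\ sets, and the family $\{\mathcal{R}_i\}_{i \in \omega}$ with $\mathcal{R}_i = W_{e_i}$ is then a uniformly $\emptyset''$-computable collection of infinite sets. Lemma \ref{jumpstrategy} yields an intrinsically small $A \leq_T \emptyset' \oplus \emptyset'' \equiv_T \emptyset''$ with $A \cap \mathcal{R}_i \neq \emptyset$ for every $i$. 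In particular, $A$ meets every infinite c.e.\ set, so $\overline{A}$ contains no infinite c.e.\ subset and is therefore immune, giving the required bi-immune intrinsically small $A$.

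The only genuinely creative step is identifying bi-immunity as the right property to diagonalize for: it is exactly what forces both the output-$0$ and output-$1$ halves of any intrinsic generic description to be finite (and hence kills intrinsic generic computability), while still being weak enough to be obtained simultaneously with intrinsic smallness by the meeting-requirements construction already packaged in Lemma \ref{jumpstrategy}. Once this is recognized, the proof reduces to a direct invocation of that lemma, and I do not anticipate any serious obstacle beyond bookkeeping the oracle strength needed to enumerate the infinite c.e.\ sets.
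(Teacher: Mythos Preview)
Your proof is correct and shares its core with the paper's: both start by invoking Lemma~\ref{jumpstrategy} to build an intrinsically small set that meets every infinite c.e.\ set. The difference is in how the failure of intrinsic generic computability is extracted. The paper designates, for each infinite $W_e$, a specific witness $a_e\in A\cap W_e$ and then passes to a subset $B\subseteq A$ defined by the explicit diagonalization $B(a_e)=1-\varphi_e(a_e)$, so that $B$ disagrees at least once with every partial computable function of infinite domain. You instead observe that the set $A$ itself is already bi-immune, and conclude directly: any putative intrinsic generic description $\varphi_e$ would have $W_e=\{n:\varphi_e(n)=1\}\cup\{n:\varphi_e(n)=0\}$ a union of two c.e.\ subsets of immune sets, hence finite, contradicting the intrinsic density~$1$ of $W_e$. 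Your route is cleaner (no auxiliary $B$), at the cost of invoking $\emptyset''$ to list the infinite c.e.\ sets, whereas the paper gets by with $\emptyset'$ by attempting to meet every $W_e$ and simply allowing the attempt to stall harmlessly on finite ones. That oracle difference is immaterial to the existence claim, and indeed the paper's $\emptyset'$-trick would slot into your argument just as well.
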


\begin{proof}
     By Lemma \ref {jumpstrategy}, there is an intrinsically small set $A$ such that for each infinite c.e. set $W_e$ there exists $a_e\in A\cap W_e$ with $a_e<a_s$ for $e< s$. That is, there is a unique designated element $a_e$ of $A$ for each infinite c.e. set $W_e$. $\emptyset'$ cannot determine if a c.e. set is infinite, but it can ask if there is a large enough element of $W_e$ to continue the construction and put that into $A$ if it exists. This may designate some elements for finite c.e. sets, but this is acceptable.
     \\
     \\
     Now define $B\subseteq A$ by agreeing with $A$ away from the $a_e$'s and diagonalizing against the $e$-th turing machine using $B(a_e)$, i.e. $B(a_e)=1-\varphi_e(a_e)$. (Note that $\varphi_e(a_e)\downarrow$ because $a_e\in W_e$.) Then $B\subseteq A$ has intrinsic density $0$ and cannot be intrinsically generically computable because it disagrees with every turing machine with infinite domain at least once.
\end{proof}

The reverse separation remains open: it is easy to ensure that a given Turing function is not an intrinsic generic description by simply finding one place where it is wrong. However, to ensure that a given Turing function is not an intrinsic coarse description, we must force it to disagree on an infinite set which is not intrinsically small, which is more difficult. The natural strategy is to take an intrinsic generic description $W_i$, say a maximal set, and attempt to change it to diagonalize against the total functions in such a way that the description is still c.e. and its complement is still intrinsically small. The issue arises from our not being able to enumerate all of the total functions using computable indices: there is an enumeration of c.e. indices which contains exactly the computable sets (given an index $e$, enumerate $W_e$ so long as the enumeration is increasing, but do not enumerate smaller elements), but there is no way to distinguish the infinite sets from the finite ones. If we know a given c.e. index $e$ yields an infinite computable set, it is easy to wait for convergence of $\varphi_e$ and diagonalize against it on an infinite computable subset of $W_i$, forcing $\varphi_e$ to not be a n intrinsic coarse description. However if $W_e$ is in fact finite, then we will never see convergence, and failing to converge for the indices of finite sets will make the complement of our new enumeration no longer intrinsically small. If we give up waiting for convergence after some length of time, then there is no guarantee that an infinite computable set will ever enumerate quickly enough to be diagonalized against. 

\begin{question}
\label{genericseparation}
Is there an intrinsically generically computable set which is not intrinsically coarsely computable?
\end{question}

One potentially useful result for this question is the result of Arslanov \cite{arslanov} that the only c.e. DNC degree is $\emptyset'$. As mentioned above, we know from \cite{intrinsicsmallness} that the degrees which contain an intrinsically small set are those which are high or DNC. As the domain of an intrinsic generic description is c.e. and can compute an intrinsically small set (its complement), its degree must be high or DNC, and therefore high.
\\
\\
Fortunately, the answer to this question resolves the remaining implications involving intrinsically densely computable sets: 

\begin{lemma}
The intrinsically densely computable sets are exactly the intrinsically coarsely computable sets if every intrinsically generically computable set is intrinsically computable, and the intrinsically densely computable sets strictly contain all of the intrinsically generically computable sets and intrinsically coarsely computable sets if this is not the case.
\end{lemma}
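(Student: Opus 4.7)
The plan rests on a single structural observation: if $A$ is intrinsically densely computable via a description $\varphi_e$, then the c.e. set $U_1 := \{n : \varphi_e(n)\downarrow = 1\}$ is itself intrinsically generically computable, and $A \triangle U_1$ is intrinsically small. First I would set $U_0 := \{n : \varphi_e(n)\downarrow = 0\}$, which together with $U_1$ gives disjoint c.e. sets. Since $\varphi_e$ is an intrinsic dense description, the error set
\[ E = (U_0 \cap A) \cup (U_1 \cap A^c) \cup \overline{U_0 \cup U_1} \]
is intrinsically small, and both $A \triangle U_1$ and $\overline{U_0 \cup U_1}$ are subsets of $E$. The partial computable function outputting $1$ on $U_1$, $0$ on $U_0$, and diverging elsewhere is correct wherever defined (because $U_0$ and $U_1$ are disjoint) with divergence set $\overline{U_0 \cup U_1}$ intrinsically small, so it is an intrinsic generic description of $U_1$.

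Part 1 of the lemma then follows by combining this observation with closure of intrinsic smallness under finite unions. Assuming every intrinsically generically computable set is intrinsically coarsely computable, $U_1$ is intrinsically coarsely computable via some computable $C$ with $U_1 \triangle C$ intrinsically small. Then $A \triangle C = (A \triangle U_1) \triangle (U_1 \triangle C)$ is a subset of a union of two intrinsically small sets, and any such union is intrinsically small (since $\overline{\rho}(\pi(S_1)\cup\pi(S_2))\leq\overline{\rho}(\pi(S_1))+\overline{\rho}(\pi(S_2))$ for every computable permutation $\pi$). So $A$ is intrinsically coarsely computable, which combined with the trivial reverse inclusion yields the claimed equality.

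Part 2 is then easy: any intrinsically generically computable $G$ that fails to be intrinsically coarsely computable is automatically intrinsically densely but not intrinsically coarsely computable, giving intrinsic dense $\supsetneq$ intrinsic coarse, while Lemma \ref{coarseseparation} unconditionally supplies an intrinsically coarsely (hence intrinsically densely) computable set that is not intrinsically generically computable, giving intrinsic dense $\supsetneq$ intrinsic generic. There is no serious combinatorial obstacle here; the entire argument hinges on the single observation that $U_1$ inherits an intrinsic generic description directly from the intrinsic dense description of $A$, and once that is in place everything else is routine manipulation of symmetric differences via closure of intrinsic smallness under finite unions.
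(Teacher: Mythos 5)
Your Part 1 argument is correct and is essentially the paper's own proof in slightly different notation: your $U_1$ is precisely the set $B$ with $\chi_B(n)=\varphi_e(n)$ on $W_e$ and $0$ off $W_e$ that the paper introduces, and passing from ``$U_1$ is intrinsically coarsely computable'' to ``$A$ is intrinsically coarsely computable'' via the symmetric-difference inclusion $A\triangle C\subseteq (A\triangle U_1)\cup(U_1\triangle C)$ is the same observation the paper phrases as ``the error set is contained within the union of two intrinsically small sets.'' So the first half needs no comment.

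The gap is in Part 2. You establish two separate proper inclusions: intrinsic coarse $\subsetneq$ intrinsic dense (witnessed by the hypothetical $G$) and intrinsic generic $\subsetneq$ intrinsic dense (witnessed by the set from Lemma \ref{coarseseparation}). But the lemma's conclusion is that the intrinsically densely computable sets strictly contain the \emph{union} of the other two classes, and two proper inclusions do not yield a proper inclusion of the union: a priori, every intrinsically densely computable set could still be either intrinsically coarsely or intrinsically generically computable even when neither class alone exhausts the dense ones. What is needed is a single set that is intrinsically densely computable yet neither intrinsically coarsely nor intrinsically generically computable. The paper produces this by taking $A$ intrinsically generically but not coarsely computable and $B$ intrinsically coarsely but not generically computable (from Lemma \ref{coarseseparation}) and forming $A\oplus B$: by Corollary \ref{join} the join of the two intrinsically small error sets is intrinsically small, so $A\oplus B$ is intrinsically densely computable, while any intrinsic coarse (resp.\ generic) description of $A\oplus B$ would restrict to one of $A$ (resp.\ $B$), a contradiction. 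Your write-up never produces such a joint witness, so the second half of the lemma is not actually proved.
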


\begin{proof}
    By Lemma \ref{coarseseparation} there is a set $B$ which is intrinsically coarsely computable but not intrinsically generically computable. Let $A$ be a set which is intrinsically generically computable but not intrinsically coarsely computable. An application of Corollary \ref{join} tells us that $A\oplus B$ is intrinsically densely computable, but it is clear that it cannot be intrinsically coarsely computable or intrinsically generically computable because any intrinsic coarse/generic description of $A\oplus B$ would necessarily yield an intrinsic coarse/generic description of $A$/$B$.
    \\
    \\
    Now suppose that every intrinsically generically computable set is intrinsically coarsely computable, and let $A$ be intrinsically densely computable with witness $\varphi_e$. Then the set $B$ defined via the characteristic function 
    \[\chi_B(n)=\begin{cases} 
      \varphi_e(n) & n\in W_e \\
      0 & n\in \overline{W_e}
   \end{cases}\]
   is intrinsically generically computable with witness $\varphi_e$. Therefore it is intrinsically coarsely computable via some total witness $\varphi_i$. Therefore $\varphi_i$ witnesses that $A$ is intrinsically coarsely computable as well because the error set is contained within the union of two intrinsically small sets (the complement of $W_e$ and the error set of $\varphi_i$ on $B$) and thus is intrinsically small.
\end{proof}

The remaining three generalizations of asymptotic computation to the intrinsic setting use a separate idea: Rather than having an intrinsically small error set that ensures the existence of descriptions, we simply assert that descriptions must exist for any computable permutation. Varying the level of uniformity for these descriptions is how we reach three separate notions (Recall that $x\in\{\text{effective dense, generic, coarse, dense}\}$):

\begin{definition}
\mbox{}
\begin{itemize}
    \item $A$ is weakly intrinsically $x$-ly computable if $\varphi_e(A)$ is $x$-ly computable for every $e\in Perm$.
    \item $A$ is uniformly $x$-ly computable if there is a computable function $f(e,n)$ such that $\lambda n(f(e,n))$ is a(n) $x$ description of $\varphi_e(A)$ when $e\in Perm$.
    \item $A\subseteq\omega$ is oracle $x$-ly computable if there is a Turing functional $\Phi_i$ such that $\Phi_i^X$ is a(n) $x$ description of $\varphi_e(A)$ whenever $e\in Perm$ and $X=\mathrm{graph}(\varphi_e)$.
\end{itemize}
\end{definition}

As in the case of the intrinsically $x$-computable sets, Astor's original definitions were ``uniformly intrinsically $x$-ly computable'' and ``oracle intrinsically $x$-ly computable,'' however we shorten these definitions for readability.
\\
\\
It is immediate that all of the straightforward implications from asymptotic computability apply here in each of the three cases, i.e. uniformly coarsely computable sets are uniformly densely computable and so on. Furthermore, it is easy to see that for all $x\in\{\text{effective dense, generic, coarse, dense}\}$, intrinsically $x$-ly computabile sets are uniformly and oracle $x$-ly computable, which both in turn are weakly $x$-ly computable.  Furthermore, albeit slightly less trivial, is the fact that oracle $x$-ly computable sets are uniformly $x$-ly computable: Given a Turing functional $\Phi_i$ which witnesses that $A$ is oracle $x$-ly computable, define the partial computable function $f(e,n)$ via $f(e,n)=\Phi_i^{\mathrm{graph}(\varphi_e)}(n)$. Then the definition of oracle $x$-ly computable ensures that this function $f$ witnesses uniformly $x$-ly computable. This means that for a fixed $x$, the four notions form a chain.
\\
\\
As noted in \cite{intrinsicdensity}, it is unclear at first if these notions are all distinct (i.e. whether or not the chain collapses), even when restricting ourselves just to the generic case. Below we shall see that they are not distinct here, although the argument will not generalize to the coarse and dense settings. However, a slight modification of it shall provide a similar but not identical result for the effective dense setting.

\begin{theorem}
\label{genericcollapse}
Suppose that $A$ is oracle generically computable. Then $A$ is intrinsically generically computable.
\end{theorem}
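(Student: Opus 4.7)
The plan is to construct an intrinsic generic description of $A$ by ``collecting'' partial information from the oracle functional $\Phi_i$ against all finite initial segments of computable permutations. Let $\Phi_i$ be the functional witnessing that $A$ is oracle generically computable, so that $\Phi_i^{\mathrm{graph}(\varphi_e)}$ is a generic description of $\varphi_e(A)$ for every $e \in Perm$. Define a partial computable function $\psi$ by declaring $\psi(n) = v$ whenever there exist a bound $k \geq n$, a finite injective function $F : [0, k] \to \omega$ with $F(n) = m$ for some $m$, and a halting computation of $\Phi_i^F(m) = v$ whose oracle queries all have first coordinate at most $k$. Dovetailing over such triples $(k, F, m)$ makes $\psi$ partial computable.

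First I would verify that $\psi$ is a correct generic description of $A$. Any finite injective $F : [0, k] \to \omega$ extends to a computable permutation $\pi_F$, and since the witnessing computation of $\Phi_i^F(m)$ only queries pairs with first coordinate in $[0, k]$, on which $F$ and $\mathrm{graph}(\pi_F)$ agree, the standard use property gives $\Phi_i^{\mathrm{graph}(\pi_F)}(m) = v$. By oracle generic computability this value must equal $\pi_F(A)(m) = A(\pi_F^{-1}(m)) = A(n)$, since $(n, m) \in F \subseteq \mathrm{graph}(\pi_F)$, so $\psi$ is well-defined and $\psi(n) \downarrow$ implies $\psi(n) = A(n)$.

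The key step is to show that the error set $E'' = \{n : \psi(n) \uparrow\}$ is intrinsically small. Let $\pi$ be any computable permutation and let $E_\pi$ denote the density-zero error set of $\Phi_i^{\mathrm{graph}(\pi)}$. If $\pi(n) \notin E_\pi$ then $\Phi_i^{\mathrm{graph}(\pi)}(\pi(n)) \downarrow$, and choosing $k$ larger than $n$ and every first coordinate queried by this computation, the restriction $F = \pi \upharpoonright [0, k]$ witnesses $\psi(n) \downarrow$. Contrapositively $\pi(E'') \subseteq E_\pi$, giving $\overline{\rho}(\pi(E'')) \leq \rho(E_\pi) = 0$ for every computable permutation $\pi$, and hence $\overline{P}(E'') = 0$. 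Applying this with $\pi = \mathrm{id}$ also confirms $\overline{\rho}(E'') = 0$, so $\psi$ really is a generic description of $A$ and its error set $E''$ is intrinsically small.

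The main technical subtlety will be the ``query-bound'' condition in the definition of $\psi$: insisting that every oracle query made by $\Phi_i^F(m)$ have first coordinate in $[0, k]$ is precisely what ensures $F$ and any permutation extending it produce the same computation, which in turn forces consistency of $\psi$. This feature appears specific to generic computability, because once the description is allowed to disagree with $A$ (as in the coarse or dense settings), different finite witnesses could produce distinct ``wrong'' values and the well-definedness argument above would break down, consistent with the paper's remark that the present argument will not generalize.
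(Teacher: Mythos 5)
Your proof is correct and takes essentially the same route as the paper: both dovetail $\Phi_i$ over finite injective approximations to computable permutations with an explicit use bound, verify correctness by extending the finite approximation to a full permutation and invoking the finite use principle, and show the error set is intrinsically small by observing that each computable $\pi$ maps it into the density-zero error set of $\Phi_i^{\mathrm{graph}(\pi)}$. The only difference is cosmetic bookkeeping (you work with finite injective functions $F\colon[0,k]\to\omega$ and bound oracle queries by first coordinate, while the paper works with finite binary strings $\sigma$ approximating graphs and bounds queries by $|\sigma|$), and your closing remark on why the argument does not transfer to the coarse or dense setting matches the paper's.
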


\begin{proof}
Let $\Phi_i$ witness that $A$ is oracle generically computable. Then define the partial computable function $f$ as follows: Note that the set of finite binary strings $\sigma$ which are initial segments of graphs of injective functions is computable. For $\sigma$ in this set, let $f_\sigma$ denote the partial injective function with finite range such that $\mathrm{graph}(f_\sigma)$ is the infinite binary string obtained by adding infinitely many $0$'s to $\sigma$. Compute $f(n)$ by searching for such a $\sigma$ with $n\in \mathrm{range}(f_\sigma)$ and $\Phi_i^{\sigma}(f_\sigma(n))\downarrow$. If one is found, define $f(n)=\Phi_i^{\sigma}(f_\sigma(n))$ for the first such $\sigma$. Otherwise, $f(n)\uparrow$.
\\
\\
First, note that $f(n)\downarrow$ implies $f(n)=A(n)$: If $f(n)\downarrow$, then there is some $\sigma$ such that $\Phi_i^{\sigma}(f_\sigma(n))\downarrow$. As $\sigma$ is an initial segment of the graph of an injective function, $\sigma$ can be extended to $X$ where $X$ is the graph of some computable permutation $\varphi_e$. Then as $\Phi_i$ witnesses that $A$ is oracle generically computable, $\Phi_i^X$ is a generic description of $\varphi_e(A)$, so $\Phi_i^X(\varphi_e(n))\downarrow$ implies
\[\Phi_i^X(\varphi_e(n))=\varphi_e(A)(\varphi_e(n))=A(n).\]
In particular, 
\[A(n)=\Phi_i^X(\varphi_e(n))=\Phi_i^\sigma(f_\sigma(n))=f(n)\]
by the finite use principle.
\\
\\
Therefore, it remains to show that the domain of $f$ has intrinsic density $1$. Notice that if $\varphi_e$ is a permutation, then $\varphi_e(dom(f))$ contains $dom(\Phi_i^{\mathrm{graph}(\varphi_e)})$, as if $\Phi_i^{\mathrm{graph}(\varphi_e)}(k)\downarrow$, there is an initial segment $\sigma$ of $\mathrm{graph}(\varphi_e)$ with $k\in \mathrm{range}(f_\sigma)$ that witnesses convergence, and therefore witnesses $f(\varphi_e^{-1}(k))\downarrow$. However, $\underline{\rho}(dom(\Phi_i^{\mathrm{graph}(\varphi_e)}))=1$ as $\Phi_i^{\mathrm{graph}(\varphi_e)}$ is a generic description of $\varphi_e(A)$ and therefore has density $1$. Thus $dom(f)$ has density $1$ under every computable permutation and thus has intrinsic density $1$ as desired.
\end{proof}

\begin{corollary}
\label{effectivedensecollapse}
    Suppose that $A$ is oracle effective densely computable. Then $A$ is intrinsically generically computable.
\end{corollary}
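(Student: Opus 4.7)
The plan is to adapt the proof of Theorem \ref{genericcollapse} almost verbatim, with a single modification to account for the $\square$-outputs that an effective dense description is allowed to produce. The key observation is that an oracle effective dense description $\Phi_i$ is total into $\{0,1,\square\}$, and the set of inputs where it answers in $\{0,1\}$ has lower density $1$ for every permutation's graph as oracle. This lets us mimic the search-and-match construction, and the resulting partial function will be a generic description with intrinsically small error set, though we cannot hope for more since by the earlier lemma intrinsically effectively densely computable sets are computable.

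Concretely, I would define a partial computable $f$ as follows. Exactly as in Theorem \ref{genericcollapse}, let $\sigma$ range over finite binary strings that are initial segments of graphs of injective partial functions, and for each such $\sigma$ let $f_\sigma$ denote the associated finite injective partial function. To compute $f(n)$, search for some such $\sigma$ with $n\in \mathrm{range}(f_\sigma)$ and $\Phi_i^\sigma(f_\sigma(n))\downarrow$ with value in $\{0,1\}$; if found, set $f(n)$ equal to that value, otherwise $f(n)\uparrow$. The only change from the earlier construction is that we demand the value be in $\{0,1\}$ rather than merely that $\Phi_i^\sigma(f_\sigma(n))$ converge.

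Next I would verify correctness on the domain. Suppose $f(n)\downarrow$ via some witnessing $\sigma$. Then $\sigma$ extends to the graph of some computable permutation $\varphi_e$, and by the finite use principle
\[\Phi_i^{\mathrm{graph}(\varphi_e)}(\varphi_e(n))=\Phi_i^\sigma(f_\sigma(n))\in\{0,1\}.\]
Since $\Phi_i$ witnesses that $A$ is oracle effectively densely computable, any value in $\{0,1\}$ output by $\Phi_i^{\mathrm{graph}(\varphi_e)}$ at $\varphi_e(n)$ must equal $\varphi_e(A)(\varphi_e(n))=A(n)$. Hence $f(n)=A(n)$.

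Finally, I would show $\mathrm{dom}(f)$ has intrinsic density $1$. For any computable permutation $\varphi_e$, the image $\varphi_e(\mathrm{dom}(f))$ contains $(\Phi_i^{\mathrm{graph}(\varphi_e)})^{-1}(\{0,1\})$: if $\Phi_i^{\mathrm{graph}(\varphi_e)}(k)\in\{0,1\}$, some finite initial segment $\sigma$ of $\mathrm{graph}(\varphi_e)$ with $k\in\mathrm{range}(f_\sigma)$ witnesses $f(\varphi_e^{-1}(k))\downarrow$. The effective dense description property forces $\underline{\rho}((\Phi_i^{\mathrm{graph}(\varphi_e)})^{-1}(\{0,1\}))=1$, so $\underline{\rho}(\varphi_e(\mathrm{dom}(f)))=1$ for every computable permutation $\varphi_e$, i.e. $\overline{\rho}(\varphi_e(\overline{\mathrm{dom}(f)}))=0$. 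Thus $\overline{\mathrm{dom}(f)}$ is intrinsically small, so $f$ witnesses that $A$ is intrinsically generically computable. There is no real obstacle here beyond correctly threading the $\{0,1\}$ restriction through the argument of Theorem \ref{genericcollapse}; the effective dense hypothesis gives lower density $1$ for free, which is precisely what was needed to conclude intrinsic density $1$ of the domain.
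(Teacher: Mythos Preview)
Your proposal is correct and follows exactly the paper's approach: rerun the construction of Theorem~\ref{genericcollapse}, but search for convergence to a value in $\{0,1\}$ rather than mere convergence. Your write-up in fact supplies the correctness and density-$1$ verifications that the paper leaves implicit.
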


\begin{proof}
    Construct the description $f$ of $A$ as in the proof of Theorem \ref{genericcollapse}, however instead of searching for convergence, search for convergence to either $0$ or $1$.
\end{proof}

As mentioned above, this argument does not in general apply to oracle coarsely computable sets and oracle densely computable sets. The issue lies in the fact that coarse and dense computation allows for mistakes, so we cannot ensure that any convergent computation is correct. 
\\
\\
The remaining implications remain open other than the previously observed chains. The difficulty in separating these notions lies in the fact that the constructed sets cannot be described by building one error set, but rather have a different error set for each computable permutation. More importantly, these countably many computable requirements are heavily interlocked: Consider attempting to construct a weakly intrinsically generically computable set which is not weakly intrinsically coarsely computable. As an example, we may try to define an error set for the identity permutation. However, this defines the membership of the constructed set on a given c.e. set $W_e$. If we wish to diagonalize for a given computable permutation $\pi$, we may find that $\pi(W_e)$ has density $1$, in which case we can't respect $W_e$ and also diagonalize on a set of positive density.

\end{document}